\title{Normal generation of
 line bundles \\on multiple coverings}
\author{Seonja Kim\thanks{This work was supported by the
Korea Research Foundation Grant funded by the Korean Government
(2005-070-C00005).}\\
Department of Digital Broadcasting $\&$ Electronics\\ Chungwoon University \\
Chungnam, 350-701, Korea \\
e-mail: sjkim{\char "40}chungwoon.ac.kr }
\date{}
\def\cli{\mbox{Cliff}}
\def\deg{\mbox{deg}}
\def\ci{\mathcal I}
\def\co{\mathcal O}
\def\cl{\mathcal L}
\def\ck{\mathcal K}
\def\cm{\mathcal M}
\def\cn{\mathcal N}
\def\ca{\mathcal A}
\def\O{\mathcal O}
\def\blt{\blacktriangle}
\def\bltd{\blacklozenge}
\def\cli{\hbox{\rm Cliff}}
\begin{document}

\newtheorem{lem}{Lemma}[section]
\newtheorem{thm}[lem]{Theorem}
\newtheorem{claim}{Claim}
\newtheorem{prop}{Proposition}
\newtheorem{Prop}[lem]{Proposition}
\newtheorem{rmk}[lem]{Remark}
\newtheorem{cor}[lem]{Corollary}
\newtheorem{exm}[lem]{Example}
\renewcommand{\theprop}{\Alph{prop}}
\maketitle

\begin{abstract}
\noindent Any line bundle $\cl $  on a smooth curve $C$ of genus
$g$ with  $\deg \cl \ge 2g+1$ is normally generated, i.e.,
$\varphi _\cl (C)\subseteq \mathbb P H^0 (C,\cl)$ is projectively
normal. However, it has known that more various line bundles of
degree $d$ failing to be normally generated  appear on multiple
coverings of genus $g$ as $d$ becomes smaller than $2g+1$.
 Thus, investigating  the normal generation
of line bundles on multiple coverings can be an effective approach
to the normal generation. In this paper, we obtain conditions for
line bundles on multiple coverings being normally generated or
not, respectively.

\end{abstract}

\noindent{\small{\bf{Mathematics Subject Classifications (2001)}}:
14H45, 14H10, 14C20.}

\noindent{\small{\bf{Key words}}: algebraic curve, multiple
covering, line bundle,  linear series, projectively normal, normal
generation.}

\setcounter{section}{0}
\section{Introduction}

Throughout this paper,  $C$  is a smooth irreducible algebraic
curve of genus $g$ over an algebraically closed field of
characteristic 0. A line bundle  $\cl$ on  $C$ is said to be
normally generated if $\cl$ is very ample and $\varphi _\cl (C)$
is projectively normal  for its associated  morphism $\varphi _\cl
: C \rightarrow \mathbb P H^0 (C,\cl)$.

Any line bundle of degree at least $2g+1$ is normally generated
and  a general line bundle of degree $2g$ on a non-hyperelliptic
curve  is normally generated(\cite{Ca}, \cite{Mat}, \cite{Mum}).
And hyperelliptic curves have no normally generated line bundles
of degree less than than $2g+1$(\cite{LM1}). Thus a natural
interest is to characterize line bundles of degree near $2g$
failing to be normally generated and curves carrying such line
bundles. In \cite{GL}, \cite{KK2}, \cite{KKO}  and \cite{MS}, they
determined conditions for   a nonspecial very ample line bundle
$\cl$ with $\deg \cl\ge 2g-5$ and a special very ample line bundle
$\cl$ with $\deg \cl\ge 2g-7$ failing to be normally generated,
respectively.
 Through those  results, it is notable that line bundles
failing to be normally generated appear on multiple covering
curves and are closely connected with line bundles on the base
curves of the coverings. Moreover, both the degrees of those
coverings  and the genera of base  curves become larger as
proceeding with those works. Thus, investigating the normal
generation of line bundles on multiple coverings is  a natural
approach to the normal generation.

The purpose of this work is to detect   conditions for line
bundles on  multiple  coverings  being normally generated or
failing to be normally generated, respectively. For a nonspecial
line bundle $\cl$ on a multiple  covering $C$ of $C'$, we
introduce, in section 4, a kind
  of concrete description such as
  \begin{equation}\label{*}\tag{*}  \cl\sim \ck _C-\phi^{*}g^0_t -B+E
  \end{equation}
    for some $B \ge 0, E> 0$ on $C$  and $g^0_t$ on $C'$
    satisfying    $h^0 (C, \phi^{*} g^0_t +B)=1$, ${\rm supp}(\phi^{*} g^0 _t +B  ) \cap
     {\rm supp}(E)=\emptyset$ and $B\ngeq \phi^{*}Q$ for any  $Q \in C'$.
  Here, $\phi$ is its covering morphism. Our results on  nonspecial line bundles
  are described in terms  of $B$ and $E$. Using this description,
  we also construct nonspecial line bundles
  possessing an intended normal generation property on multiple coverings.

 The results of this work are as follows. Here, we assume that $C$  admits
  an  $n$-fold covering morphism $\phi :C \rightarrow C'$
 for a smooth curve $C'$ of genus $p$.
  For the following (1) and (2), we additionally
 assume that  $\phi$ is simple with $g>np$ and define numbers
  $\mu:=[\frac {2n(n-1)p}{g-np}]$ and $\delta := \min \{ \frac{g}{6},
 ~ \frac{g-np}{n-1} -2, ~ \frac{2(2n+\mu -3)}{(2n+\mu
    -1)^2}g~\}$.

  \noindent{(1)}
  Let $\cl$ be a nonspecial
  line bundle on $C$ with $h^0 (C,\cl ) \ge 3$. We may set
  $\cl\sim \ck _C-\phi^{*}g^0_t -B+E$  as (*).
Then, $\cl$ is  normally generated if $B = \sum _{i=1}^{b}\phi^{*}
( Q_i)-P_i$ with $\phi(P_i )=Q_i$, $\deg E> b+2$ and
 $\deg \cl > 2g+1-\delta $. Specifically, any nonspecial line bundle
  $\cl\sim \ck _C-\phi^{*}g^0_t -B+E$ as
   (*) on  a double covering $C$
 with $3g >8(p+1)$ is  normally
  generated in case $\deg E> \deg B +2$ and  $\deg \cl >2g+1-\frac{g}{6}$.

 \noindent{(2)} Let  $\cl$ be   a special very ample line bundle on $C$
  with $\deg \cl >\frac{3g-3}{2}$.
 Assume $\ck _C \otimes \cl^{-1}=\phi ^{*} \cn (-\sum ^{b} _{i=1 }P_i)$
  for some line bundle $\cn$ on $C'$ and  $\sum ^{b} _{i=1 }P_i$
 on $C$ such that  $\sum ^{b} _{i=1 } \phi (P_i) \leq \cn$ and
 $P_i+P_j \nleqslant \phi^{*} (\phi
 (P_i))$.  Then  $\cl$ is normally generated
 if $b\le 3$ and    $\deg \cl > 2g +1-2h^1 (C,\cl) -\delta$.
 (Note that the condition $P_i+P_j \nleqslant \phi^{*} (\phi
 (P_i))$ is satisfied if the points of $\sum ^{b} _{i=1 }\phi (P_i )$
 are distinct.) In particular, any line bundle $\cl$
 on a double covering with $3g > 8(p+1)$ such that
 $\ck _C \otimes \cl ^{-1}  \sim  \phi^{*} \cm\otimes \co _C (B)$, $B \ngeq \phi^{*}Q$
 for any  $Q \in C'$, then $\cl $ is normally generated in case
 $\deg B \leq 3$ and $\deg  \cl >2g +1 -\frac{g}{6} -2h^1 (C, \cl
 )$.

For each of these results,  we also obtain
  examples of   line bundles failing to be normally generated  on   multiple
  coverings  in case the number $b$  is lying on the
  outside boundary of the condition, i.e.,  $\deg E =b+2$, $b=4$, respectively.
  Hence those conditions  are sharp in some sense.

Using the result (1),  for any
 $d> \mbox{max}\{ 2g-2p,~ 2g+1 -\frac{g}{6}\}$
 we construct nonspecial normally
generated line bundles  of degree $d$ on double coverings  with $g
\ge 4p$(: See Corollary \ref{thm:theorem11}, whose result also
contains the cases $n\ge 3$.).
 On the one hand, a double covering of genus g admits no nonspecial normally
generated line bundles $\cl$ with $g+5 \le \deg \cl \le
2g-3p$(\cite{LM1}).

\noindent{(3)} Let $\cl$ be a nonspecial  line bundle on $C$ with
$h^0 (C,\cl ) \ge 3$.  We may set $\cl\sim \ck _C-\phi^{*}g^0_t
-B+E$ as  (*). Assume $B = \sum _{i=1}^{b}\phi^{*} ( Q_i)-P_i$,
$\phi(P_i )=Q_i$ and $h^1 (C, \cl ^2 (-\sum ^{b} _{i=1 } P_i
-E))=0$.
  Then  $\cl$ is very ample and  fails to be  normally generated
   if $a\ge 3 $ and $a+b  >\frac{(a+b-r)(a+b-r-1)}{2}$,~ where
$a=\deg E$, $r=h^0 (C', g^0_t +\sum ^{b} _{i=1 }  Q_i )-1$.

\noindent{(4)} Let $\cl$ be a special  very ample  line bundle on
$C$.  Assume $\ck _C \otimes \cl ^{-1}=\phi ^{*} \cn (-\sum ^{b}
_{i=1 }P_i)$
  for some line bundle $\cn$ on $C'$ and  $\sum ^{b} _{i=1 }P_i$
 on $C$. Set $c:=h^0 (C,
\phi ^* \cn)  -h^0 (C, \phi ^{*} \cn (-\sum ^{b}_{i=1} P_i))$.
Then $\cl$ fails to be  normally generated if  $ b>
\frac{(b-c)(b-c+1)}{2}$ and   $h^1 (C, \cl ^2(- \sum ^{b} _{i=1
}P_i))=0$.

 \vskip5pt
 In this paper, $g^r_d$ means a linear series of dimension $r$
and degree $d$. In particular, $g^0_d$ also denotes the
corresponding effective divisor of degree $d$. The notation
$\cl-g^r_d$ means $\cl(-D)$ for $D\in g^r_d$ and a line bundle
$\cl$. For a divisor $D$ and a line bundle $\cl$ on a smooth curve
$C$, we also denote $h^i (C, {\cal O}_C (D))$ by $h^i (C,D)$ and
${\cal O} _C (D)\subseteq \cl$ by $D \le \cl$.  And $\ck _C$ means
the canonical line bundle on $C$. The Clifford index of a smooth
curve $C$ is defined by $\cli(C):= \min \{ \cli(\mathcal L) : h^0
(C, \mathcal L) \geq 2, \ h^1 (C, \mathcal L) \geq 2\}$, where
$\cli (\cl ) =\deg \cl -2h^0 (C, \cl ) +2$.

\section{Preliminaries}

Before going into main theorems, we consider some lemmas which
will be used in our study.

\begin{lem} Let $\cl$ be a very ample line bundle on
  a smooth curve $C$. Consider the embedding
$C\subset \mathbb P H^0(C,\cl )=\mathbb P^r$ defined by $\cl$.
Then $\cl$ fails to be normally generated if there exists an
effective divisor $D$ on $C$ such that ${\ deg} D >
\frac{(n+1)(n+2)}{2}$ and $H^1(C,\cl^2(-D))=0$, where $  n:=dim
\overline{D}$, $ \overline{D}:=\cap \{ H \in H^0 (\mathbb P^r,
\co_{\mathbb P^r} (1))~| ~ H.C \ge D \}$.\label{Prop:prop 1.6}
\end{lem}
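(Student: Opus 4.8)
The plan is to contradict projective normality in degree two. Since $\cl$ is given very ample, the paper's definition makes normal generation equivalent to projective normality, and projective normality forces the restriction map $H^0(\mathbb P^r,\co_{\mathbb P^r}(2))\to H^0(C,\cl^2)$ to be surjective. So it suffices, under the stated hypotheses, to show this degree-two map cannot be surjective, using the divisor $D$ as a detector. First I would record the geometric meaning of $\overline D$: a hyperplane $H$ satisfies $H\cdot C\ge D$ exactly when the corresponding section of $\cl$ vanishes along the subscheme $D$, so $\overline D=\cap\{H\mid H\cdot C\ge D\}$ is precisely the linear span $\langle D\rangle\subset\mathbb P^r$ of $D$, and $n=\dim\langle D\rangle$.

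Next I would exploit the hypothesis $H^1(C,\cl^2(-D))=0$. From the restriction sequence
\[
0\to\cl^2(-D)\to\cl^2\to\cl^2|_D\to 0
\]
the vanishing $H^1(C,\cl^2(-D))=0$ makes the evaluation $H^0(C,\cl^2)\to H^0(\cl^2|_D)$ surjective, and the target has dimension $\deg D$ since $\cl^2|_D$ is a line bundle on the finite scheme $D$ of length $\deg D$. Consequently, if $\cl$ were normally generated the composite $H^0(\mathbb P^r,\co_{\mathbb P^r}(2))\to H^0(C,\cl^2)\to H^0(\cl^2|_D)$ would be surjective, forcing its image to have dimension exactly $\deg D$.

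The heart of the argument is to bound that same image from above. Because $D\subset\overline D$ and $\overline D\cong\mathbb P^n$ is a linear subspace, and because $\cl^2|_D=\co_{\mathbb P^r}(2)|_D$, the evaluation of a quadric on $D$ depends only on its restriction to $\overline D$; that is, the map $H^0(\mathbb P^r,\co_{\mathbb P^r}(2))\to H^0(\cl^2|_D)$ factors through $H^0(\overline D,\co_{\overline D}(2))$. Hence its image has dimension at most $h^0(\overline D,\co_{\overline D}(2))=\binom{n+2}{2}=\frac{(n+1)(n+2)}{2}$. Combining with the previous paragraph, normal generation would yield $\deg D\le\frac{(n+1)(n+2)}{2}$, contradicting the assumption $\deg D>\frac{(n+1)(n+2)}{2}$, so $\cl$ fails to be normally generated.

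The cohomology sequence and the length count $\dim H^0(\cl^2|_D)=\deg D$ (valid for a line bundle on a finite, possibly nonreduced, scheme) are routine. I expect the one step to state with care to be the identification of $\overline D$ with the linear span $\langle D\rangle$ together with the resulting factorization of quadric-evaluation through $H^0(\overline D,\co_{\overline D}(2))$: this is exactly the point where $n=\dim\overline D$ produces the bound $\frac{(n+1)(n+2)}{2}$, and it is the crux rather than any hard estimate.
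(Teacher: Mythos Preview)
Your proof is correct and follows essentially the same strategy as the paper: both bound the image of $H^0(\mathbb P^r,\co_{\mathbb P^r}(2))\to H^0(\co_D(2))$ by $\binom{n+2}{2}$ using the linear span $\overline{D}$, and then invoke $H^1(C,\cl^2(-D))=0$ to conclude that $C$ is not $2$-normal. The only cosmetic difference is that the paper phrases the first step as a dimension count of quadric cones with vertex $\overline{D}$ inside $H^0(\ci_{D/\mathbb P^r}(2))$ and then chases the ideal-sheaf sequences $0\to\ci_{C/\mathbb P^r}(2)\to\ci_{D/\mathbb P^r}(2)\to\ci_{D/C}(2)\to 0$, whereas you argue by the equivalent (and arguably cleaner) factorization through $H^0(\overline{D},\co_{\overline{D}}(2))$.
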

\begin{proof}
 Set
$$\Psi:= \{ S \in H^0 (\mathbb P^r,
\co_{\mathbb P^r} (2)) ~| ~ S:
 \mbox {  quadric cone with vertex }\overline{D} \}.$$
 Then, $\Psi\subseteq H^0 (\mathbb P ^r, \ci _{D/\mathbb P ^r} (2))$~ and
  $$\mbox{dim}\Psi = \mbox{dim Grass}(r-n-1, r) +
h^0 ( \mathbb P ^{r-n-1} , \O _{\mathbb P ^{r-n-1}} (2))=
\frac{r^2 +3r -n^2 -3n}{2} .$$ This yields
$$h^0 (\mathbb P ^r , \O _{\mathbb P ^r} (2)) -
h^0 (\mathbb P ^r , \ci _{D/\mathbb P ^r} (2))\le  h^0 (\mathbb P
^r , \O _{\mathbb P ^r} (2)) -\mbox{dim}\Psi  <\deg D,$$ for $\deg
D>\frac{(n+1)(n+2)}{2}$. From this, we get $h^1 (\mathbb P ^r ,\ci
_{D/\mathbb P ^r} (2))\neq 0$ by the exact sequence $0 \rightarrow
\ci _{D/\mathbb P ^r} (2) \rightarrow \O _{\mathbb P ^r} (2)
\rightarrow \co _{D} (2) \rightarrow 0$. Considering the exact
sequence $0 \rightarrow \ci _{C/\mathbb P ^r} (2) \rightarrow \ci
_{D/\mathbb P ^r} (2) \rightarrow \ci _{D/C} (2) \rightarrow 0$
and $h^1 (\mathbb P ^r ,\ci _{D/C} (2))= h^1(C,\cl^2(-D))=0$, we
have $h^1 (\mathbb P ^r, \ci _{C/\mathbb P ^r} (2))\neq 0$, which
proves the result.
\end{proof}
 This is practical for verifying the  non-normal generation
  of line bundles on smooth curves, since its conditions
  are purely numerical
  and hence can be computed by  theories about linear series.
On the one hand,  we have the following lemma from the proof of
Theorem 3 in \cite {GL}, which
 is useful to determine the normal generation of   a line bundle, since
 it provides  another line bundle  with  higher speciality
   in case the line bundle fails to be normally generated.
\begin{lem}\label{rmk:remark2.2}
Let  $\cl$ be a very ample line bundle on $C$ with  $\deg
\cl>\frac{3g-3}{2}+\epsilon$, where $\epsilon=0$ if $\cl$ is
special, $\epsilon=2$ if $\cl$ is nonspecial. If $\cl$ fails to be
normally generated, then there exists a line bundle $\ca\simeq \cl
(-R), ~R>0$ such that \hbox{\rm (i)} $\cli (\ca )\le \cli (\cl )$,
\hbox{\rm (ii)} $\deg \ca\ge \frac{g-1}{2}$, \hbox{\rm (iii)}
$h^0(C,\ca)\ge 2$ and $h^1(C,\ca)\ge h^1(C,\cl)+2$.
\end{lem}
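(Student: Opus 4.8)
The plan is to reconstruct the argument from the proof of Theorem~3 of \cite{GL}. Embed $C\subset\bP^r=\bP H^0(C,\cl)$ with $r+1=h^0(C,\cl)$, so that $C$ is linearly normal. First I would reduce the failure of normal generation to a failure of \emph{quadric} generation. Since $\deg\cl>\frac{3g-3}{2}+\epsilon$, every power $\cl^k$ with $k\ge2$ has $\deg\cl^k\ge2\deg\cl>3g-3\ge2g-2$, hence $h^1(C,\cl^k)=0$ for all $k\ge2$; together with linear normality, Castelnuovo's lemma for curves (cf.\ \cite{Mum}, and the proof of Theorem~3 in \cite{GL} for the special case) reduces projective normality of $C\subset\bP^r$ to the surjectivity of $H^0(\bP^r,\O_{\bP^r}(2))\to H^0(C,\cl^2)$. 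So the hypothesis that $\cl$ fails to be normally generated gives $h^1(\bP^r,\ci_{C/\bP^r}(2))\neq0$.

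Next I would pass to a general hyperplane section. Let $\Gamma=C\cap H$ for a general hyperplane $H\cong\bP^{r-1}$, a finite set of $\deg\cl$ points. From $0\to\ci_{C/\bP^r}(1)\to\ci_{C/\bP^r}(2)\to\ci_{\Gamma/H}(2)\to0$ together with $H^1(\bP^r,\ci_{C/\bP^r}(1))=0$ (linear normality), one obtains $h^1(H,\ci_{\Gamma/H}(2))\ge h^1(\bP^r,\ci_{C/\bP^r}(2))\ge1$, i.e.\ $\Gamma$ fails to impose independent conditions on quadrics of $\bP^{r-1}$. Since $C$ is irreducible and nondegenerate, the general/uniform position theorem for hyperplane sections applies, and then the classical lemma on points imposing conditions on quadrics (Castelnuovo) forces a large effective subdivisor $R_0\le\Gamma$ to span a proper linear subspace $\Lambda=\overline{R_0}\subsetneq\bP^{r-1}$ with $\deg R_0$ large relative to $\dim\Lambda$. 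Taking $R$ to be the divisor cut on $C$ by the span of $R_0$ inside $\bP^r$, this yields an effective divisor $R>0$ for which $h^0(C,\cl(-R))$ is large compared with $\deg\bigl(\cl(-R)\bigr)$.

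Then I would set $\ca:=\cl(-R)$ and read off the statement by Riemann--Roch. Write $t:=\deg R$ and $\delta:=h^0(C,\cl)-h^0(C,\ca)$; the content of the previous step is that $h^0(C,\ca)\ge2$ (the first half of (iii)) and $2\delta\le t$ (the numerical shadow of ``$R$ large relative to its span''). Very ampleness of $\cl$ forces $\delta\ge2$, since removing any two points of $R$ already drops $h^0$ by $2$; hence $t\ge4$. Consequently
$$\cli(\ca)-\cli(\cl)=2\delta-t\le0,\qquad h^1(C,\ca)-h^1(C,\cl)=t-\delta\ge\tfrac{t}{2}\ge2,$$
which give (i) and the remaining half of (iii); and the upper bound on $t$ forced by the degree hypothesis gives $t\le\deg\cl-\tfrac{g-1}{2}$, hence $\deg\ca=\deg\cl-t\ge\tfrac{g-1}{2}$, which is (ii).

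The main obstacle — and the only nonformal part — is the middle step: identifying precisely which subdivisor $R_0\le\Gamma$ the uniform-position/quadrics lemma yields, and tracking the two numerical inequalities $2\delta\le t$ and $t\le\deg\cl-\frac{g-1}{2}$ that it must produce. This bookkeeping is the technical heart of the argument in \cite{GL}; the cohomological reduction of the first step and the Riemann--Roch translation of the last are routine.
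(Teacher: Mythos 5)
There is a genuine gap, and it sits exactly at the step you yourself flag as ``the technical heart.'' Note first that the paper does not prove this lemma at all: it imports it from the proof of Theorem 3 in \cite{GL}, which works on the curve itself with the multiplication maps $H^0(\cl)\otimes H^0(\cl^k)\to H^0(\cl^{k+1})$ (Koszul/base-point-free-pencil-trick type lemmas), not with generic hyperplane sections. Your first step (reduction to the quadric level under the degree hypothesis) and your last step (the Riemann--Roch bookkeeping $\cli(\ca)-\cli(\cl)=2\delta-t$, $h^1(\ca)-h^1(\cl)=t-\delta$) are fine, but they are the routine parts: conclusions (i), (ii) and (iii) are literally equivalent to the three inequalities $2\delta\le t$, $t\le\deg\cl-\frac{g-1}{2}$ and $h^0(\cl(-R))\ge 2$, none of which you derive -- you only restate them as ``the numerical shadow'' of a middle step that is not carried out.

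Worse, the middle step as formulated cannot work. By the uniform position theorem the points of $\Gamma=C\cap H$ are in \emph{linearly general} position in $H\cong\bP^{r-1}$; hence any subdivisor $R_0\le\Gamma$ with at most $r$ points spans a linear space of dimension exactly $\deg R_0-1$, and any subdivisor with more than $r$ points spans all of $H$, in which case the divisor cut out by its span is $C\cap H\in|\cl|$ and $\ca=\cl(-R)\cong\co_C$. So there is no subdivisor of $\Gamma$ that is ``large relative to its span,'' and no classical lemma produces one: the classical statement about conditions on quadrics for points in linearly general position only gives the cardinality bound $\deg\cl\ge 2h^0(\cl)-2$ (or, in the extremal case of Castelnuovo's lemma, that $\Gamma$ lies on a rational normal curve), which yields neither the divisor $R$ nor the inequalities above, and in particular never uses the hypothesis $\deg\cl>\frac{3g-3}{2}+\epsilon$ that must enter to get $\deg\ca\ge\frac{g-1}{2}$. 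To repair the argument you should follow \cite{GL}: from the non-surjectivity of $H^0(\cl)\otimes H^0(\cl)\to H^0(\cl^2)$ one extracts, by their multiplication-map lemmas applied to suitably chosen $\ca=\cl(-R)$ (e.g.\ minimal ones for which surjectivity still fails), the bounds on $\deg R$ and on $h^0(\cl)-h^0(\ca)$ that your hyperplane-section picture was supposed to supply.
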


Since the lemma plays an important role in this work, we
frequently have to compute the Clifford indices of line bundles.
Thus,  preparing the following is effective to prove the main
results.

\begin{lem}
Let  $\cm$ be a base point free line bundle on $C$  with
deg$\cm\le 2g-2$ such that its associated morphism $\varphi
_{\cm}$ is birational.

$(i)$ If $\deg \cm\ge g-1$, then $\cli (\cm )\ge \dfrac{\deg ~\ck
_C \otimes \cm ^{-1}}{3}$.

$(ii)$ If $\deg \cm\le g-1$, then
\begin{eqnarray*}
\cli (\cm )&\ge &\frac{g}{3} -1\qquad~~~~~~~\mbox{ for } ~~l=2,\\
\cli (\cm )&>&\frac{2(l-1)}{(l+1)^2}g-1 ~~~\mbox{ for } ~~l\ge 3 ,
\end{eqnarray*}
where $l :=\left [\frac{2g}{\deg \cm-1}\right ]$. \label{Prop:prop
1.7}
\end{lem}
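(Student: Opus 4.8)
The plan is to bound the Clifford index of $\cm$ from below by bounding $h^0(C,\cm)$ from above, using a Castelnuovo-type estimate for curves with a birational map to projective space. Write $r:=h^0(C,\cm)-1$ and $d:=\deg\cm$, so that $\varphi_\cm$ maps $C$ birationally onto a nondegenerate curve of degree $d$ in $\bP^r$; the Castelnuovo genus bound then gives $g\le \pi(d,r)$, where $\pi(d,r)$ is the Castelnuovo number. For part $(i)$, when $d\ge g-1$ one can also use Clifford's theorem together with Riemann--Roch on the residual bundle $\ck_C\otimes\cm^{-1}$: since $\cm$ is special (as $d\le 2g-2$) and computes a Clifford index, we have $2h^0(C,\cm)-2\le d$ from Clifford, but to get the factor $\frac{1}{3}$ one needs the finer input that $\varphi_\cm$ is birational, which forces $h^0(C,\cm)\le \frac{d}{2}+1-\lceil\frac{e}{?}\rceil$ type refinements; more precisely I would run the Castelnuovo bound in the form $h^0(C,\cm)-1 = r \le \frac{d}{2}$ is too weak, so instead I would invoke the standard consequence (see \cite{GL}-style arguments, or Castelnuovo theory) that for a birationally very ample linear series of degree $d\ge g-1$ one has $r\le \frac{d - \cli(\cm)}{2}$ combined with $\cli(\cm) = d-2r \ge \frac{2g-2-d}{3} = \frac{\deg(\ck_C\otimes\cm^{-1})}{3}$; the key inequality $3(d-2r)\ge 2g-2-d$, i.e. $r\le \frac{2d-2g+2}{6}+\frac{d}{?}$, should follow from the Castelnuovo bound $g\le\pi(d,r)$ by solving for $r$ in the relevant range $r\le d-g$ (so $\pi(d,r)=\binom{m}{2}(d-r) + m\varepsilon$ simplifies), and this is the computation I would carry out carefully.

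For part $(ii)$, when $d\le g-1$ the curve lies in the range where $r\le \frac{d-1}{?}$ and the Castelnuovo number is $\pi(d,r)=\binom{m}{2}(r-1)(m?)$—here I would set $m=\lfloor\frac{d-1}{r}\rfloor$ as in classical Castelnuovo theory, so that $g\le \pi(d,r)$ reads roughly $g\le \frac{m(m-1)}{2}r + m\varepsilon$ with $0\le\varepsilon<r$, hence $g\le \frac{(m+1)}{2}(d-1) - \frac{m(m+1)}{2}$-ish. The parameter $l=\lfloor\frac{2g}{d-1}\rfloor$ is introduced precisely so that $m\approx l/2$: from $g\le\frac{m(m+1)}{2}\cdot\frac{d-1}{m}$ heuristically one gets $2g\lesssim (m+1)(d-1)$, i.e. $l\approx m$. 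I would then express $\cli(\cm)=d-2r$ and minimize over the allowed $(d,r)$ with fixed $l$, which for $l=2$ gives $\cli(\cm)\ge\frac{g}{3}-1$ and for $l\ge 3$ gives the strict inequality $\cli(\cm)>\frac{2(l-1)}{(l+1)^2}g-1$. The bookkeeping—translating $g\le\pi(d,r)$ into a lower bound on $d-2r$ in terms of $l$—is where I expect the real work, and I would handle the cases $l=2$ and $l\ge3$ separately because the Castelnuovo bound has a different shape (the quadratic term in $m$ dominates differently) in the two regimes.

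The main obstacle will be organizing the Castelnuovo estimate so that the floor functions defining $m$ and $l$ are controlled simultaneously: one must show that the worst case of $\frac{d-2r}{g}$ over all lattice points $(d,r)$ with $g\le\pi(d,r)$ and $\lfloor\frac{2g}{d-1}\rfloor=l$ is achieved at the boundary, and then verify the resulting rational inequality. A secondary subtlety is the hypothesis that $\varphi_\cm$ is birational (not merely that $\cm$ is base point free): this is exactly what licenses the use of the Castelnuovo bound $g\le\pi(d,r)$ rather than a weaker one, and without it the conclusion fails (e.g. for $\cm$ pulled back from a lower-genus curve), so I would make sure that hypothesis is invoked at precisely the point where Castelnuovo's theorem is applied. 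Once the inequality $g\le\pi(d,r)$ is in hand, parts $(i)$ and $(ii)$ are a matter of elementary—if slightly tedious—estimation, and I would present only the key inequalities rather than every arithmetic step.
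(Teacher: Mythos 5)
Your overall strategy is the same as the paper's: use the birationality of $\varphi_\cm$ to invoke a Castelnuovo-type bound on $h^0(C,\cm)$ and then translate that into a lower bound on $\cli(\cm)=d-2\alpha$, treating the regimes $d\ge g-1$, $l=2$, and $l\ge 3$ separately and using $d>\frac{2g}{l+1}+1$ in the last case. However, as written the proposal has a genuine gap: the substantive step is never carried out. What the paper actually uses are the explicit bounds $\alpha\le\frac{2d-g+1}{3}$ when $d\ge g-1$ (from Castelnuovo's genus bound), $\alpha\le\frac{3d-g+3}{6}$ when $l=2$, and $\alpha\le\frac{d+l}{l+1}$ when $l\ge 3$, the latter two being quoted as Lemmas 8 and 9 of \cite{KK1}; once these are in hand the lemma follows by one line of arithmetic in each case. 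Your text gestures at deriving such bounds from $g\le\pi(d,r)$ but leaves every key inequality unproved, and the fragments you do write down are not correct: the Castelnuovo number is $\pi(d,r)=\binom{m}{2}(r-1)+m\varepsilon$ with $m=\left[\frac{d-1}{r-1}\right]$ and $\varepsilon=d-1-m(r-1)$, not the expressions with $m=\left[\frac{d-1}{r}\right]$ or $\binom{m}{2}(d-r)$ that you propose, and in part $(i)$ the inequality ``$r\le\frac{d-\cli(\cm)}{2}$ combined with $\cli(\cm)\ge\frac{2g-2-d}{3}$'' is circular, since the second inequality is exactly what is to be proved.

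To close the gap you would need to actually invert $g\le\pi(d,r)$ in each regime. For instance, for $(i)$ one checks that in the relevant range the binding case is $m=2$, where $\pi(d,r)=2d-3r+1$, so $g\le 2d-3\alpha+1$, i.e.\ $\alpha\le\frac{2d-g+1}{3}$, giving $\cli(\cm)\ge\frac{2g-d-2}{3}$; for $(ii)$ one needs the analogous (and more delicate) inversions yielding $\alpha\le\frac{3d-g+3}{6}$ and $\alpha\le\frac{d+l}{l+1}$, which is precisely the content of the cited lemmas of \cite{KK1}. Your instinct about where the birationality hypothesis enters (it licenses the Castelnuovo bound, and the statement fails for series composed with a covering) is correct, and the final estimation for $l\ge3$ using $d>\frac{2g}{l+1}+1$ matches the paper; but without the three displayed bounds on $\alpha$, or a correct derivation of them from $\pi(d,r)$, the proof is not complete.
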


\begin{proof} Set $\alpha :=h^0 (C, \cm ) -1$ and $d:=\deg \cm$.
First, assume $d\ge g-1$. Then  $\alpha\le \frac{2d-g+1}{3}$ by
Castelnuovo's genus bound, and hence
$$\mbox{Cliff}(\cm)=d-2\alpha\ge
\frac{2g-d-2}{3}=\frac{\mbox{deg}~\ck _C \otimes \cm ^{-1}}{3}.$$

Next, suppose $d\le g-1$. Set $l=\left [\frac{2g}{d-1}\right ]$.
Then  $\frac{2g}{l}+1\ge d>\frac{2g}{l+1}+1$. If $l=2$, then
Castelnuovo's genus bound yields $\alpha\le \frac{3d-g+3}{6}$(:
Lemma 8 in \cite{KK1}), which implies $\mbox{Cliff}(\cm
)=d-2\alpha\ge \frac{g-3}{3}$. If $l\ge 3$, then by Lemma 9 in
\cite{KK1} we have $\alpha\le \frac{d+l}{l+1}$, and so
$$\mbox{Cliff}(\cm )\ge
d-\frac{2d+2l}{l+1}=\frac{(l-1)d-2l}{l+1}>\frac{2(l-1)}{(l+1)^2}g-1$$
for $d>\frac{2g}{l+1}+1$. Thus the result follows. \end{proof}
 To prove our main results, we will use a figure which draws the
correspondence between points of $C$ and $C'$ for a multiple
covering morphism $\phi : C\rightarrow C'$. By such a figure, some
computations about line bundles will be simplified if those line
bundles are composed with $\phi$. To do such a work, we need the
following.
\begin{lem}
Assume that $C$  admits
  a simple  $n$-fold covering morphism $\phi :C \rightarrow C'$
 for a smooth curve $C'$ of genus $p$.  And let $\cm$ be a
line bundle on $C$ with $h^0 (C, \cm ) \ge 3$ and $\cli (\cm )
<\frac{g-np}{n-1} -3$. Then  $\cm (-B)$ is either simple or
composed with $\phi$, where $B$ is the base locus of $\cm$.
 \label{Prop:prop
2.6}
\end{lem}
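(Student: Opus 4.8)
The plan is to argue by contradiction: suppose $\cm(-B)$ is neither simple nor composed with $\phi$. Let $\psi := \varphi_{\cm(-B)} : C \to \bP^{\alpha}$ be the morphism associated to the base-point-free line bundle $\cm(-B)$, where $\alpha = h^0(C,\cm) - 1 = h^0(C,\cm(-B)) - 1 \ge 2$. Since $\cm(-B)$ is not simple, $\psi$ factors as $C \xrightarrow{\pi} C'' \xrightarrow{\psi'} \bP^{\alpha}$ through a morphism $\pi$ of degree $m \ge 2$ onto a smooth curve $C''$, with $\psi'$ birational onto its image; so $\cm(-B) \sim \pi^{*}\cn''$ for some base-point-free line bundle $\cn''$ on $C''$ with birational associated map. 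The hypothesis that $\cm(-B)$ is not composed with $\phi$ means $\pi$ is not (equivalent to) $\phi$, i.e. the two coverings $\phi$ and $\pi$ do not share a common factor; combined with the simplicity of $\phi$, this forces the product map $(\phi,\pi): C \to C'\times C''$ to be birational onto its image.

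The next step is the degree–genus estimate coming from that product map. Since $(\phi,\pi)$ is birational onto a curve in $C'\times C''$ of bidegree $(n, m)$ (the projections have degrees $n$ and $m$), the arithmetic genus of that image curve is at least $g$ (birational image of $C$), while it is bounded above by an expression involving $n$, $m$, $p$ and the genus $p''$ of $C''$ — essentially $g \le nm + n(p''-1) + m(p-1) + 1$ by the adjunction/projection bound on $C'\times C''$ (equivalently, one uses that $\phi^{*}Q$ and $\pi^{*}Q''$ meet in $nm$ points and runs the standard "two pencils" Castelnuovo-type count). Using $p''\ge 0$ and $m\ge 2$, this yields a lower bound on $\deg\cn''$ and hence on $\deg\cm(-B)$ in terms of $g$, $n$, $p$: roughly $\deg\cm(-B) = m\deg\cn'' \ge$ something like $\frac{g-np}{n-1}$ up to a bounded error.

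Now I would feed $\cm(-B)$ — which is base-point-free with birational associated map and of controlled degree — into Lemma \ref{Prop:prop 1.7} to lower-bound $\cli(\cm(-B))$, and observe $\cli(\cm(-B)) \le \cli(\cm)$ since removing base points does not increase the Clifford index (degree drops by $\deg B$, $h^0$ unchanged). Either the branch $\deg\cm(-B)\ge g-1$ applies, giving $\cli(\cm(-B)) \ge \frac{1}{3}\deg(\ck_C\otimes\cm(-B)^{-1})$, or $\deg\cm(-B)\le g-1$ and the $l$-dependent bounds apply; in both cases, combined with the degree bound from the previous paragraph, one gets $\cli(\cm) \ge \cli(\cm(-B)) \ge \frac{g-np}{n-1} - 3$, contradicting the hypothesis. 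The main obstacle I expect is the genus bound for the image of $(\phi,\pi)$ in $C'\times C''$ and making the "not composed with $\phi$ $\Rightarrow$ product map birational" step precise — one must rule out that $\pi$ and $\phi$ factor through a nontrivial common covering, which is exactly where the simplicity of $\phi$ is used: a common intermediate covering $C\to D\to C'$ with $D\to C'$ of degree $>1$ is forbidden, so any factorization of $\pi$ that is "seen" by $\phi$ must be trivial, forcing the fibre product description to degenerate to the graph. Handling the possibility that $C''$ itself has large genus $p''$ (which would weaken the degree bound) requires noting that $\deg \cn'' \ge$ (something forcing $\psi'$ nondegenerate) together with $2g-2 \ge \deg\cm \ge \deg\cm(-B) = m\deg\cn''$, so $p''$ cannot be too large; balancing these is the delicate bookkeeping.
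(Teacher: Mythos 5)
There is a genuine gap at the quantitative heart of the argument. After factoring $\varphi_{\cm(-B)}$ as $\psi'\circ\pi$ with $\pi:C\to C''$ of degree $m\ge 2$, you apply the Castelnuovo--Severi inequality to the birational product $\phi\times\pi$; but that inequality reads $g\le np+mp''+(n-1)(m-1)$ (you also swapped the roles of $p$ and $p''$), and it involves the genus $p''$ of $C''$ rather than $\deg\cn''$ or $\alpha$. Nothing in the hypotheses controls $p''$: by Castelnuovo's genus bound, $p''$ can be of order $(\deg\cn'')^2/2(\alpha-1)$ (e.g.\ $C''$ a plane curve when $\alpha=2$), so the term $mp''$ can dwarf $g-np$, and no lower bound of the shape $\cli(\cm)\ge\frac{g-np}{n-1}-3$ follows from these two inequalities. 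Your proposed repair, using $2g-2\ge m\deg\cn''$ to bound $p''$, only gives $p''$ quadratic in $g/m$, which is far too weak; and the subsequent appeal to Lemma \ref{Prop:prop 1.7} is not available here, since that lemma requires $\varphi_{\cm}$ to be birational, which is exactly what is being denied in your contradiction hypothesis (it is also stated for bundles on $C$, not on $C''$).

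The paper's proof avoids $p''$ altogether, and this is the idea your sketch is missing: project the image curve $\varphi_{\cm}(C)\subset\bP^{\alpha}$ from $\alpha-2$ general points of itself to $\bP^{2}$, i.e.\ pass to $\cm(-\sum_{i=1}^{\alpha-2}\varphi_{\cm}^{*}(Q_i))$, and check via the rational map $\nu:C'\to\varphi_{\cm}(C)$ (regular because $C'$ is smooth, and $\pi$ is birational) that this series is still not composed with $\phi$; then a general sub-pencil $g^1_{d-m\alpha+2m}$ is not composed with $\phi$, so simplicity of $\phi$ makes $\phi\times\varphi_{g^1}$ birational and Castelnuovo--Severi with a genus-zero target gives $g\le(n-1)(d-m\alpha+2m-1)+np$. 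The point of dropping to a pencil is twofold: the genus of the second target disappears, and the degree drops by $m(\alpha-2)$, which is precisely what produces $\cli(\cm)=d-2\alpha\ge\frac{g-np}{n-1}-2m+1+(m-2)\alpha\ge\frac{g-np}{n-1}-3$ for $m\ge 2$, $\alpha\ge 2$. Your identification of where simplicity enters (ruling out a common intermediate covering) is correct and matches the paper, but without the projection-to-a-pencil step the numerical contradiction is not reached.
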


\begin{proof}
 The condition $\cli
(\cm ) <\frac{g-np}{n-1} -3$ also implies  $\cli (\cm (-B))
<\frac{g-np}{n-1} -3$. Thus we may assume $\cm$ is generated by
its global sections. Suppose $\cm$ is neither simple nor composed
with $\phi$. Set $d:=\deg \cm$, $\alpha :=h^0 (C, \cm ) -1$ and
$m:= \deg \varphi _{\cm}$.

Consider a birational projection $\pi$ : $\varphi_{\cm}(C)
\rightarrow \mathbb P^2$ from  general $(\alpha-2)$-points $\sum
^{\alpha -2}_{i=1} Q_i$ of $\varphi_{\cm}(C)$. Then the morphism
$\pi \circ \varphi _{\cm} : C \rightarrow \mathbb P^2$ is
associated to the line bundle $\cm(-\sum ^{\alpha
-2}_{i=1}\varphi_{\cm}^{*} (Q_i))$. Thus we have the following
commutative diagram.

\begin{picture}(300,124)

\put(90,75){$C$}

\put(110,80){\vector(1,0){60}}

\put(100,65){\vector(2,-1){80}}

\put(180,75){$\varphi_{\cm}(C)\subset \mathbb P^{\alpha}$}

\put(200,65){\vector(0,-1){30}}

\put(210,45){$\pi$}

\put(130,87){$\varphi_{\cm}$}

\put(70,35){$\varphi_{\cm(-\sum ^{\alpha
-2}_{i=1}\varphi_{\cm}^{*} (Q_i))}$}

\put(196,15){$\mathbb P^{2}$}

\end{picture}

\noindent{If }$\varphi_{\cm(-\sum ^{\alpha
-2}_{i=1}\varphi_{\cm}^{*} ( Q_i))}$ is  composed with $\phi$,
then $\varphi_{\cm(-\sum ^{\alpha -2}_{i=1}\varphi_{\cm}^{*}  (
Q_i))}=\mu \circ \phi $ for a morphism $\mu$ of degree $\ge 2$.
Then there is a rational morphism $\nu : C' \rightarrow
\varphi_{\cm} (C)$ such that the following diagram commutes as
rational morphisms, since $\pi$ is birational.

\begin{picture}(320,128)

\put(10,75){$C$}

\put(30,80){\vector(1,0){60}}

\put(20,65){\vector(2,-1){80}}

\put(100,75){$\varphi_{\cm}(C)$}

\put(120,30){\vector(0,1){30}}

\put(127,45){$\nu$}

\put(50,85){ $\varphi_{\cm}$}

\put(165,84){ $\pi$}

\put(50,35){$\phi$}

\put(175,33){$\mu$}

\put(116,15){$C'$}

\put(140,80){\vector(1,0){60}}

\put(210,80){$\varphi_{\cm(-\sum ^{\alpha
-2}_{i=1}\varphi_{\cm}^{*} (Q_i))}(C)$}
 \put(140,25){\vector(2,1){80}}

\end{picture}

The smoothness of $C'$ implies that the rational map $\nu$ is
regular, which contradicts that $\varphi_{\cm}$ is not composed
with $\phi$. Accordingly, the morphism $\varphi_{\cm(-\sum
^{\alpha -2}_{i=1}\varphi_{\cm}^{*} ( Q_i))}$ is not  composed
with $\phi$, whence for a general subseries $g^1_{d-m\alpha +2m}$
of $|M(-\sum ^{\alpha -2}_{i=1}\varphi_{\cm}^{*} (Q_i))|$  the
product morphism $\phi \times \varphi _{g^1_{d-m\alpha +2m}}$ is
birational since $\phi$ is simple. Applying the Castelnuovo-Severi
inequality, we obtain $ g\le (n-1)(d-m\alpha +2m-1) +np$ and hence
 $$\cli (\cm )=d-2\alpha \ge \frac{g-np}{n-1} -2m
+1+ (m-2)\alpha \ge \frac{g-np}{n-1} -3$$
 since $\alpha \ge 2$, $m \ge 2$. It
contradicts to  $\cli (\cl )<\frac{g-np}{n-1} -3$, since $\cli
(\ca )\le \cli (\cl )$. Thus the result follows.
\end{proof}

\section{Normal generation of nonspecial line bundles
 on multiple coverings}

In this section, we investigate the normal generation of
nonspecial line bundles on multiple coverings. To do this, we
consider a concrete description for  nonspecial line bundles on a
smooth curve. Let $\cl$ be a nonspecial line bundle on a smooth
curve $C$. There exists a divisor $E>0$ such that $h^0 (C,\ck _C
\otimes \cl ^{-1} (E))=1$ and $h^0 (C,\ck _C \otimes \cl ^{-1}
(E'))=0$ for $E' < E$. Then $\cl\sim \ck _C-g^0_d+E$ for a $g^0_d$
 on $C$ satisfying  ${\rm supp}( g^0 _d) \cap
{\rm supp}(E)=\emptyset$, where $g^0_d$ means a degree $d$ divisor
with $h^0 (C, g^0_d)=1$. Note that we have $\deg \ck _C \otimes
\cl ^{-1} (E) \le g$ since $h^0 (C,\ck _C \otimes \cl ^{-1}
(E))=1$.

Suppose $C$ admits a multiple covering morphism $\phi :
C\rightarrow C'$ for some smooth curve $C'$. Then
$g^0_d=\phi^{*}g^0_t +B$ for $B\ge 0$ on $C$ and $g^0_t$ on $C'$,
where there is no $Q\in C'$ such that $B\ge \phi^{*} Q$. Thus a
nonspecial line bundle $\cl$ on the multiple covering $C$  can be
written by $$\cl\sim \ck _C-\phi^{*}g^0_t -B+E$$ for some $B\ge 0,
E>0$ on $C$ and $g^0 _t$ on $C'$ such that

\noindent(1) $h^0 (C, \phi^{*} g^0 _t +B)=1$,~ (2) ${\rm
supp}(\phi^{*} g^0 _t +B) \cap {\rm supp}(E)=\emptyset$,

 \noindent(3)
$B\ngeq \phi^{*} Q$ for any  $Q \in C'$.

 Note that the nonspecial line bundle $\cl$
with $h^0 (C, \cl ) \ge 3$ is very ample if and only if $\deg E\ge
3$. Using this description, we obtain a sufficient condition for
the normal generation of $\cl$ in terms of $B$ and $E$.

\begin{thm}
Assume that $C$  admits
  a simple  $n$-fold covering morphism $\phi :C \rightarrow C'$
 for a smooth curve $C'$ of genus $p$ with  $g>np$.
 Let $\cl$ be a nonspecial line bundle with  $h^0 (C, \cl ) \ge 3$. We
may set $\cl\sim \ck _C-\phi^{*}g^0_t -B+E$ for some $B \ge 0, ~E
> 0$ on $C$ and $g^0 _t$ on $C'$ satisfying
the  conditions (1), (2) and (3) in the above. Assume  $B = \sum
_{i=1}^{b}\phi^{*} (  Q_i)-P_i$ with $\phi(P_i )=Q_i \in C'$.
Then,
 $\cl$ is normally generated ~if $\deg E> b+2$ and
$\deg \cl > 2g +1-\delta$, which is equivalent to
 $\cli (\cl ) < \delta -1$,~where
$\delta := \min \{ \frac{g}{6}, ~ \frac{g-np}{n-1} -2, ~
\frac{2(2n+\mu -3)}{(2n+\mu
    -1)^2}g~\}$ and  $\mu:=[\frac {2n(n-1)p}{g-np}]$.
\label{thm:theorem7}
\end{thm}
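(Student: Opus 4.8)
The plan is to argue by contradiction using Lemma~\ref{rmk:remark2.2}. Suppose $\cl$ is not normally generated. Since $\cl$ is nonspecial with $h^0(C,\cl)\ge 3$ and $\deg E>b+2\ge 2$ guarantees very ampleness, and $\deg\cl>2g+1-\delta>\frac{3g-3}{2}+2$ (using $\delta\le\frac{g}{6}$), Lemma~\ref{rmk:remark2.2} produces a line bundle $\ca\simeq\cl(-R)$ with $R>0$ such that $\cli(\ca)\le\cli(\cl)<\delta-1$, $\deg\ca\ge\frac{g-1}{2}$, $h^0(C,\ca)\ge 2$ and $h^1(C,\ca)\ge h^1(C,\cl)+2=2$. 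First I would pass to the base-point-free part: write $\ca\sim\cm(D)$ where $\cm$ is the moving part (base point free) and $D\ge 0$ is the base locus; then $\cli(\cm)\le\cli(\ca)$ and $h^0(C,\cm)=h^0(C,\ca)\ge 2$, $h^1(C,\cm)\ge 2$, so $\cli(C)\le\cli(\cm)<\delta-1$, and $\deg\cm\le 2g-2$.

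Next I would analyze $\varphi_\cm$. If $\varphi_\cm$ is birational onto its image, then Lemma~\ref{Prop:prop 1.7} applies: with $l:=[2g/(\deg\cm-1)]$ one gets a lower bound on $\cli(\cm)$ that, combined with $\deg\cm\le\deg\cl$ and the hypothesis $\cli(\cm)<\delta-1\le\frac{2(2n+\mu-3)}{(2n+\mu-1)^2}g-1$, should force a numerical contradiction --- here the number $\mu$ enters precisely to control the relevant value of $l$ for degrees in the range $\deg\cm\ge\frac{g-1}{2}$. (The case $\deg\cm\ge g-1$ uses part (i) of Lemma~\ref{Prop:prop 1.7}, giving $\cli(\cm)\ge\frac13\deg(\ck_C\otimes\cm^{-1})\ge\frac13(2g-2-\deg\cl)$, which exceeds $\delta-1$ when $\deg\cl>2g+1-\delta$ after a short computation; the case $\deg\cm\le g-1$ uses part (ii).) If instead $\varphi_\cm$ is not birational, then by Lemma~\ref{Prop:prop 2.6} (applicable since $\cli(\cm)<\delta-1\le\frac{g-np}{n-1}-3$) the bundle $\cm$ is composed with $\phi$; then $\cm=\phi^*\cn$ for a line bundle $\cn$ on $C'$ with $h^0(C',\cn)\ge 2$, and the Clifford index and degree relations force $\deg\cn$ large while $p$ is small (from $g>np$), again a contradiction with $\cli(\cm)<\delta-1$ via a Clifford-type bound on $C'$.

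Having excluded the geometric possibilities for $\ca$, the remaining task is to contradict the \emph{existence} of $\ca$ directly from the concrete description $\cl\sim\ck_C-\phi^*g^0_t-B+E$ with $B=\sum_{i=1}^b(\phi^*Q_i-P_i)$. The point is that $\ca=\cl(-R)$ has $h^1(C,\ca)\ge 2$, i.e. $h^0(C,\ck_C\otimes\ca^{-1})\ge 2$, where $\ck_C\otimes\ca^{-1}\sim\phi^*g^0_t+B-E+R$. I would show that the hypothesis $\deg E>b+2$ makes the "subtracted" part $E$ too large for $\phi^*g^0_t+B-E+R$ to carry an $h^0\ge 2$ unless $\deg R$ is correspondingly large, which in turn pushes $\deg\ca=\deg\cl-\deg R$ below $\frac{g-1}{2}$, contradicting property (ii) of Lemma~\ref{rmk:remark2.2}. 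Concretely: since $h^0(C,\phi^*g^0_t+B)=1$, any section of $\phi^*g^0_t+B-E+R$ must "use up" divisors meeting $E$, and with $\mathrm{supp}(\phi^*g^0_t+B)\cap\mathrm{supp}(E)=\emptyset$ together with $B\ngeq\phi^*Q$ this is a genuine constraint; the inequality $\deg E>b+2$ is exactly what is needed to close it.

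\medskip

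\textbf{Main obstacle.} I expect the last step --- extracting a contradiction from the combinatorics of $B$, $E$ and $R$ inside the description (*) --- to be the delicate part, since it is not purely numerical: one must use the structure $B=\sum(\phi^*Q_i-P_i)$ and the base-point-freeness condition (3) to control how sections of $\ck_C\otimes\ca^{-1}$ behave, and show the bound $\deg E>b+2$ is the sharp threshold. The Clifford-index estimates from Lemmas~\ref{Prop:prop 1.7} and~\ref{Prop:prop 2.6} are routine once the right value of $l$ (hence $\mu$) is identified, but keeping all three terms in the definition of $\delta$ simultaneously in play --- $\frac{g}{6}$ for the speciality/very-ampleness thresholds, $\frac{g-np}{n-1}-2$ for the "composed with $\phi$" dichotomy, and $\frac{2(2n+\mu-3)}{(2n+\mu-1)^2}g$ for the Castelnuovo bound --- and verifying they are individually consistent with $\deg\cl>2g+1-\delta$ will require careful bookkeeping.
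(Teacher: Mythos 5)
There is a genuine gap: the heart of the paper's proof is exactly the part you postpone and never supply. After Lemma \ref{rmk:remark2.2} produces $\ca\simeq\cl(-R)$, the paper takes $\cm$ to be the base-point-free part of the \emph{Serre dual} $\ck_C\otimes\ca^{-1}$ (not of $\ca$, as you do) and shows $\varphi_\cm$ is composed with $\phi$. Contrary to your second paragraph, the composed case is \emph{not} a contradiction (a pullback can easily have small Clifford index, e.g.\ $\cli\le 2p$ on a double cover), and it is precisely in this case that the hypotheses $B=\sum_{i=1}^b(\phi^*Q_i-P_i)$ and $\deg E>b+2$ are used: one checks $R\ge E$, writes $R(-E)=\phi^*F_l+R_0$ with $R_0\ngeq\phi^*Q$, lets $k$ be the number of the $P_i$ contained in $R$, and uses composedness with $\phi$ to bound $h^1(\ca)=h^0(\ck_C\otimes\ca^{-1})\le l+k+1$; the inequality $\cli(\ca)\le\cli(\cl)$ together with nonspeciality of $\cl$ translates into $\deg R\le 2h^1(\ca)\le 2(l+k+1)$, while $\deg R\ge\deg E+nl+k$, giving $\deg E\le k+2\le b+2$, the desired contradiction. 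Your proposed substitute mechanism for the final step (forcing $\deg R$ so large that $\deg\ca<\frac{g-1}{2}$) is not supported by any estimate you give — adding a few points to the rigid divisor $\phi^*g^0_t+B$ can raise $h^0$ without making $\deg R$ large, so no purely numerical argument of that shape closes the gap — and you yourself flag this step as unresolved.

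A secondary but real misstep: because you pass to the moving part of $\ca$ rather than of $\ck_C\otimes\ca^{-1}$, your treatment of the birational case with $\deg\cm\ge g-1$ fails numerically. You claim $\cli(\cm)\ge\frac{1}{3}(2g-2-\deg\cl)$ exceeds $\delta-1$; but $\deg\cl>2g+1-\delta$ gives $2g-2-\deg\cl<\delta-3$, so this bound is weaker than $\frac{\delta-3}{3}$ and yields nothing. The paper's choice of $\cm$ fixes this: since $\cm$ is the moving part of the dual, $\deg(\ck_C\otimes\cm^{-1})\ge\deg\ca\ge\frac{g-1}{2}$ by Lemma \ref{rmk:remark2.2}(ii), whence $\cli(\cl)\ge\cli(\cm)\ge\frac{g-1}{6}$, contradicting $\cli(\cl)<\delta-1\le\frac{g}{6}-1$. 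Your use of the Castelnuovo--Severi inequality and Lemmas \ref{Prop:prop 1.7} and \ref{Prop:prop 2.6} to handle the remaining birational range and to reach the composed-with-$\phi$ dichotomy is in the spirit of the paper, but as written the argument neither completes the numerical cases correctly nor contains the counting argument in which the hypotheses on $B$ and $E$ actually do their work.
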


\begin{proof}
  The line bundle
  $\cl$ is very ample  for $\deg E\ge 3$. Suppose $\cl$ fails to be normally
generated.  Then, $C$ has a line bundle $\ca\simeq \cl (-R)$ with
$R>0$, satisfying the conditions in Lemma \ref{rmk:remark2.2}. Let
$\cm:=\ck _C \otimes \ca ^{-1} (-\widetilde{B})$, where
$\widetilde{B}$ is the base locus of $\ck _C  \otimes \ca^{-1}$.
 Set $d:=\deg \cm$ and $\alpha :=h^0
(C, \cm ) -1$. Then  $\alpha \ge 1$ by Lemma \ref{rmk:remark2.2}
\hbox{\rm (iii)}. Assume $\alpha =1$ and $\cm$ is not composed
with  $\phi$. Then by the Castelnuovo-Severi inequality we obtain
$g\le (n-1)(d-1) +np$, since $\phi$ is simple. Then
$$\frac{g-np}{n-1} -1 \le d-2 = \cli (\cm ) \le \cli (\cl ) <
\frac{g-np}{n-1} -3,$$ which cannot occur. Hence $\varphi _{\cm}$
must be composed with the covering morphism $\phi$. Consider the
other cases $\alpha \ge 2$. Assume $\varphi _{\cm}$ is birational.
If $\deg \cm \ge g-1$, then  Lemma \ref{Prop:prop 1.7} and
 Lemma \ref{rmk:remark2.2} (ii) yield
 $$\cli (\cl ) \ge \cli (\cm )\ge \frac{\deg \ck _C \otimes \cm ^{-1}}{3}
 \ge \frac{\deg \ca}{3} \ge \frac{g-1}{6},$$ which cannot occur.
Accordingly, $\deg \cm\le g-1$.

 By the  birationality of $\varphi _{\cm}$ and the
Castelnuovo-Severi inequality, we have $g\le (n-1)(d-1)+np$ and so
$(\frac{g-np}{g})\frac{2g}{d-1}\le 2(n-1)$. Since $g>np$,
$$[\frac{2g}{d-1}] \le [2(n-1)(1+ \frac{np}{g-np} )]\le 2(n-1) +\mu ,$$
where  $\mu:=[\frac {2n(n-1)p}{g-np}]$. Lemma \ref{Prop:prop 1.7}
implies either  $\cli (\cm)
>\frac{2(2n+\mu -3)}{(2n+\mu -1)^2}g -1$ or $\cli (\cl ) \ge \frac{g}{3}
-1$, which is a contradiction to $\cli (\cl ) < \delta -1$. Thus
we have  $m\ge 2$, and hence
 $\cm=\ck _C \otimes
\ca^{-1} (-\widetilde{B})$ is composed with the covering morphism
$\phi$ by Lemma \ref{Prop:prop 2.6} and the condition $\cli (\cl )
<\frac{g-np}{n-1} -3$.

 Note that $R$ contains $E$, since $h^0 (C, \ck
_C \otimes \ca^{-1})\ge 2$, $\co _C (\phi ^{*} g^0_t +B -E)=\ck _C
\otimes \cl ^{-1} =\ck  _C \otimes \ca^{-1} (-R)$, $h^0 (C,
\phi^{*} g^0 _t +B)=1$ and ${\rm supp}(\phi^{*} g^0 _t +B) \cap
{\rm supp}(E)=\emptyset$. Set $R(-E)= \phi^{*} (F_l )  + R_0$
 for a divisor $R_0 \ge 0$  on $C$ and  a degree $l$ divisor
  $F_l$ on $C'$ such that
  $R_0 \ngeq \phi^{*} Q $ for any $Q\in C'$.
Assume only the points $P_1$,...,$P_k$ of $\sum _{i=1}^{b}P_i$
    are  contained in $R$. Set $G:=F_l +\sum _{i=
    1}^{k} \phi (P_i)$. Then $\phi ^* (g^0_t +G)$ corresponds
    to the pullback part of $\ck \otimes \ca ^{-1}$ via the
    covering    morphism $\phi$.

 For a better understanding, consider the
following Figure \ref{figure 5} which figures the correspondence
of  points on curves $C$ and $C'$.

\begin{figure}[ht]

\begin{picture}(300,300)

\put(153,275){$C$}

\put(178,280){\vector(1,0){100}}

\put(221,285){$\phi$}

\put(218,270){$n:1$}

\put(297,275){$C'$}

\put(140,165){$\star$}

\put(142,148){$\vdots$}

\put(140,135){$\star$}

\put(153,165){$\cdots$}

\put(153,135){$\cdots$}

\put(170,165){$\star$}

\put(172,148){$\vdots$}

\put(170,135){$\star$}

\put(185,255){$\blt$}

\put(187,238){$\vdots$}

\put(185,225){$\blt$}

\put(185,210){$\bullet$}

\put(187,193){$\vdots$}

\put(185,180){$\bullet$}

\put(140,210){$\bullet$}

\put(142,193){$\vdots$}

\put(140,180){$\bullet$}

\put(142,148){$\vdots$}

\put(140,135){$\star$}

\put(140,120){$\star$}

\put(142,103){$\vdots$}

 \put(140,90){$\star$}

\put(153,210){$\cdots$}

\put(153,180){$\cdots$}

\put(153,165){$\cdots$}

\put(153,165){$\cdots$}

 \put(153,135){$\cdots$}

\put(153,120){$\cdots$}

\put(153,90){$\cdots$}

 \put(170,210){$\bullet$}

\put(172,193){$\vdots$}

\put(170,180){$\bullet$}

\put(170,165){$\star$}

\put(172,148){$\vdots$}

\put(170,135){$\star$}

\put(170,120){$\star$}

\put(172,103){$\vdots$}

\put(170,90){$\star$}

\put(185,165){$\circ$}

\put(187,148){$\vdots$}

\put(185,135){$\circ$}

\put(185,120){$\blt$}

\put(187,103){$\vdots$}

\put(185,90){$\blt$}

\put(130,168){\line(1,0){5}}

\put(130,168){\line(0,-1){76}}

\put(130,92){\line(1,0){5}}

{\scriptsize \put(121,125){$B$}}

\put(111,215){\line(1,0){5}}

\put(111,215){\line(0,-1){123}}

\put(111,92){\line(1,0){5}}

{\scriptsize \put(62,150){$\ck _C \otimes \cl ^{-1} (E)$}}

\put(197,168){\line(-1,0){5}}

\put(197,168){\line(0,-1){30}}

\put(197,138){\line(-1,0){5}}

{\scriptsize \put(200,150){$\sum _{i=k+1}^{b} P_i $}}

\put(140,75){$\blt$}

\put(142,58){$\vdots$}

\put(140,45){$\blt$}

\put(153,75){$\cdots$}

\put(153,45){$\cdots$}

\put(170,75){$\blt$}

\put(172,58){$\vdots$}

\put(170,45){$\blt$}

\put(185,75){$\blt$}

\put(187,58){$\vdots$}

\put(185,45){$\blt$}

\put(140,30){$\blt$}

\put(142,13){$\vdots$}

\put(140,0){$\blt$}

\put(150,30){$\cdots$}

\put(170,30){$\blt$}

\put(145,0){$\cdots$}

\put(157,0){$\blt$}

\put(157,13){$\vdots$}

\put(200,123){\line(-1,0){5}}

\put(200,123){\line(0,-1){31}}

\put(200,92){\line(-1,0){5}}

{\scriptsize\put(203,108){$\sum _{i=1}^{k} P_i $}}

\put(60,216){\line(1,0){5}}

\put(60,216){\line(0,-1){215}}

\put(60,1){\line(1,0){5}}

{\scriptsize \put(21,110){$\ck  _C  \otimes \ca^{-1}$}}

\put(300,210){$\bullet$}

\put(302,193){$\vdots$}

\put(300,180){$\bullet$}

\put(300,165){$\circ$}

\put(302,148){$\vdots$}

\put(300,135){$\circ$}

\put(300,120){$\bltd$}

\put(302,103){$\vdots$}

\put(300,90){$\bltd$}

\put(220,175){\vector(1,0){55}}

\put(300,75){$\bltd$}

\put(302,57){$\vdots$}

\put(300,45){$\bltd$}

\put(290,78){\line(1,0){5}}

\put(290,78){\line(0,-1){33}}

\put(290,45){\line(1,0){5}}

{\scriptsize \put(278,60){$F_l$}}

\put(320,128){\line(-1,0){5}}

\put(320,128){\line(0,-1){83}}

\put(320,45){\line(-1,0){5}}

{\scriptsize \put(326,80){$G$}}

\put(200,258){\line(-1,0){5}}

\put(200,258){\line(0,-1){30}}

\put(200,228){\line(-1,0){5}}

{\scriptsize \put(205,240){$E$}}

\put(317,214){\line(-1,0){5}}

\put(317,214){\line(0,-1){30}}

\put(317,184){\line(-1,0){5}}

\put(321,195){$g^0_t$}

\put(245,178){$\phi$}

\end{picture}
\caption{$\cl$ is nonspecial \label{figure 5}}

\end{figure}

Here,

\noindent i) $E$ : the sum of  the points being arranged as
triangles on the  left upper side,

\noindent ii) $\ck _C \otimes \cl ^{-1}(E)$ : the sum of the
points being arranged as black dots and stars on the left side,

\noindent iii) $B$ : the sum of the points being arranged as the
assigned  stars on the left side,

\noindent iv) $R$ : the sum of  the points being arranged as
triangles on the left  side,

\noindent v) $\sum _{i=1}^{k} P_i $ : the sum of the points being
 arranged as the assigned triangles on the left side,

\noindent vi)$\sum _{i=k+1}^{b} P_i $ : the sum of the points
being
 arranged as blank circles on the left side,

 \noindent vii) $\ck  _C  \otimes \ca^{-1}$ : the sum of the points
being arranged as black dots and
 stars  on the left side and triangles on the left lower side,

\noindent viii) $g^0_{t}$ : the sum of the points being arranged
as black dots on the  right side,

\noindent ix) $G$ : the sum of the  points being arranged as $(l+k
)$-black diamonds  on the right side,

 \noindent x)  $F_l$ : the sum of the points
being arranged as the assigned $l$-black diamonds on the right
side.\\

From Figure \ref{figure 5}, we easily see that $h^0 (C, \ck  _C
\otimes \ca^{-1})\le k +l +1$, since $\varphi _{\cm}$ is composed
with $\phi$. Thus
$$\deg R \le 2(h^0 (C,\ck _C \otimes
\ca^{-1})-h^0 (C,\ck _C \otimes \cl ^{-1}))\le 2(k +l +1),$$ since
$\mbox{Cliff}(\ck  _C \otimes \ca^{-1})\le \mbox{Cliff}(\ck _C
\otimes \cl ^{-1})$ and $\ca\cong \cl (-R)$. Note that $\deg R \ge
\deg E +nl + k$, since $R(-E) \ge \phi^{*} F_l + \sum _{i=1}^{k}
P_i $. Accordingly,
 $\deg E \le k+2$, which cannot occur for $\deg E >b+2$. Thus the
theorem is proved.
\end{proof}

Specifically, the theorem is more simplified for double coverings.

\begin{cor} Let $C$ admit a double covering
   morphism $\phi : C  \rightarrow C'$ for a smooth curve $C'$ of genus $p$
with $3g >8(p+1)$. And let $\cl$ be a
 nonspecial  line bundle with $h^0 (C, \cl ) \ge 3$ and  $
 ~\deg \cl >2g+1-\frac{g}{6}$.
 We may set $\cl\sim \ck _C-\phi^{*}g^0_t-B+E$ as before.
   Then $\cl$ is normally
generated~ if $\deg E > \deg B+2$. \label{thm:theorem14}
\end{cor}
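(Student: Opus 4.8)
The plan is to obtain the corollary as the specialisation of Theorem~\ref{thm:theorem7} to $n=2$; the only things to check are that the hypothesis $3g>8(p+1)$ forces $\delta=\frac{g}{6}$, and that the structural assumption on $B$ made in Theorem~\ref{thm:theorem7} is automatic for a double covering.

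First I would note that a degree-two covering morphism is automatically simple (indeed Galois, with deck involution $\iota$), and that $3g>8(p+1)>8p$ already forces $g>2p=np$, so Theorem~\ref{thm:theorem7} is applicable in principle. For the shape of $B$: since $\deg \phi^{*}Q=2$ for every $Q\in C'$, each point $R\in C$ satisfies $R=\phi^{*}\phi(R)-\iota(R)$, so any effective divisor on $C$ — in particular the $B$ in (*) — can be written as $B=\sum_{i=1}^{b}\bigl(\phi^{*}(Q_i)-P_i\bigr)$ with $\phi(P_i)=Q_i$ and $b=\deg B$. Hence $\cl\sim\ck_C-\phi^{*}g^0_t-B+E$ meets all the hypotheses of Theorem~\ref{thm:theorem7} with $n=2$, and the assumption $\deg E>\deg B+2$ reads $\deg E>b+2$.

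Next I would unwind the constant $\delta$ in the case $n=2$: there $2n+\mu-1=\mu+3$, $\frac{g-np}{n-1}-2=g-2p-2$ and $\mu=\bigl[\frac{4p}{g-2p}\bigr]$, so $\delta=\min\{\frac{g}{6},\,g-2p-2,\,\frac{2(\mu+1)}{(\mu+3)^{2}}g\}$. For the middle term, $g-2p-2\ge\frac{g}{6}$ is equivalent to $5g\ge12(p+1)$, which follows from $3g>8(p+1)$. For the last term, $3g>8p$ gives $\frac{4p}{g-2p}<6$, hence $\mu\le5$; and for every integer $\mu$ with $0\le\mu\le6$ one has $12(\mu+1)\ge(\mu+3)^{2}$, i.e.\ $\frac{2(\mu+1)}{(\mu+3)^{2}}g\ge\frac{g}{6}$. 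Therefore $\delta=\frac{g}{6}$, and the hypothesis $\deg \cl>2g+1-\frac{g}{6}$ is exactly $\deg \cl>2g+1-\delta$.

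With these observations, Theorem~\ref{thm:theorem7} at once gives that $\cl$ is normally generated. The whole argument is essentially bookkeeping; the single point where something could a priori fail is the numerical comparison of the three quantities defining $\delta$, but $3g>8(p+1)$ is comfortably stronger than needed, so I anticipate no genuine obstacle.
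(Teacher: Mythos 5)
Your proposal is correct and follows essentially the same route as the paper: specialize Theorem~\ref{thm:theorem7} to $n=2$ and check numerically that $3g>8(p+1)$ makes $g-2p-2\ge\frac{g}{6}$ and $\mu\le 6$ (so the third term of $\delta$ also exceeds $\frac{g}{6}$), hence $\delta=\frac{g}{6}$. Your extra observation that on a double covering any effective $B$ automatically has the shape $\sum_{i}(\phi^{*}(Q_i)-P_i)$ via the deck involution is a point the paper leaves implicit, and it is a worthwhile clarification, but it does not change the argument.
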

\begin{proof} Since $n=2$ and $3g
>8(p+1) $,  we obtain $\frac{g-np}{n-1} -2 \ge \frac{g}{6} $ and
 $\mu:=[\frac {2n(n-1)p}{g-np}]\le 6$, whence
$\frac{2(2n+\mu -3)}{(2n+\mu -1)^2}> \frac{1}{6}$. Thus  the
result follows from Theorem \ref{thm:theorem6}.
\end{proof}

 It has known that if  $C$  is a double
covering of a smooth curve $C'$ of genus $p$ then $C$ admits no
nonspecial normally generated line bundles $\cl$ with $g+5 \le
\deg \cl \le 2g-3p$(\cite{LM1}). On the one hand, we obtain the
following result by Theorem \ref{thm:theorem7}.

\begin{cor}
Assume that $C$  admits
  a simple  $n$-fold covering morphism $\phi :C \rightarrow C'$
 for a smooth curve $C'$ of genus $p$ with $g\ge
\mbox{max}\{n^2 p , \frac{(2n+1)^2}{3} \}$. Then $C$ has a
nonspecial normally generated line bundle  of degree $d$ for any
$$ d\ >\ \begin{cases} \mbox{max}\{ 2g-np,~ 2g+1-\frac{g}{6}
\},~\mbox{ if }~n\le 4\\
 \mbox{max}\{ 2g-np,~2g+1-\frac{2(2n-1)}{(2n+1)^2}g\},~\mbox{ if } ~n\ge
 5
\end{cases}$$
\label{thm:theorem11}
\end{cor}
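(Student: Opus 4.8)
The plan is to produce, for every admissible $d$, a nonspecial line bundle of degree $d$ with a presentation $\cl\sim\ck_C-\phi^{*}g^0_t-B+E$ of the shape in Theorem~\ref{thm:theorem7} with $B=0$, and then to invoke that theorem. Two cases are immediate: if $d\ge 2g+1$ then $\cl$ is classically normally generated, and if $p=0$ the hypothesis $d>2g-np$ already gives $d\ge 2g+1$; so we may assume $p\ge 1$ and $2g-np<d\le 2g$, whence $g\ge n^2p>np$ and the hypotheses of Theorem~\ref{thm:theorem7} hold. Put $t:=\lceil(2g+1-d)/n\rceil$ and $e:=d-(2g-2)+nt$; from $2g-np<d\le 2g$ one checks $1\le t\le p$ (this is where $d>2g-np$ enters) and $3\le e\le n+2$. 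Pick a general effective divisor $g^0_t=R_1+\cdots+R_t$ on $C'$ (so $h^0(C',g^0_t)=1$ since $t\le p$) and a general effective divisor $E$ of degree $e$ on $C$ with $\mathrm{supp}(E)\cap\phi^{-1}(\mathrm{supp}\,g^0_t)=\emptyset$, and define
$$\cl:=\ck_C\otimes\co_C\!\left(E-\phi^{*}g^0_t\right),\qquad\deg\cl=(2g-2)+e-nt=d.$$
Once one knows $h^0(C,\phi^{*}g^0_t)=1$, it follows that $h^1(C,\cl)=h^0(C,\co_C(\phi^{*}g^0_t-E))=0$ (the unique effective divisor in $|\phi^{*}g^0_t|$ misses $E$), so $\cl$ is nonspecial with $h^0(C,\cl)=d-g+1\ge 3$, and $\cl\sim\ck_C-\phi^{*}g^0_t-B+E$ with $B=0$ satisfies conditions (1)--(3) of the theorem.

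The crucial point is $h^0(C,\phi^{*}g^0_t)=1$ for a general $g^0_t$ of degree $t\le p$. Writing $\phi_*\co_C=\co_{C'}\oplus\mathcal{E}$ (trace decomposition), the projection formula gives $h^0(C,\phi^{*}g^0_t)=h^0(C',g^0_t)+h^0(C',\mathcal{E}(g^0_t))=1+h^0(C',\mathcal{E}(g^0_t))$, so it suffices that $h^0(C',\mathcal{E}(g^0_t))=0$ generically. Here $\deg\mathcal{E}=1-g-n+np$, so the hypothesis $g\ge n^2p$ makes $\mathcal{E}$ extremely negative ($\mu(\mathcal{E})\le-1-np$). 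For $n=2$, $\mathcal{E}$ is a line bundle of degree $2p-g-1<-p$, so $\deg\mathcal{E}(g^0_t)<0$ and the vanishing is immediate. In general, every saturated sub-line bundle $L\subseteq\mathcal{E}\subseteq\phi_*\co_C$ has $\deg L\le-1$: an inclusion $L\hookrightarrow\phi_*\co_C$ is a nonzero section of $\phi^{*}L^{\vee}$, forcing $\deg L\le0$, while $\deg L=0$ would give $\phi^{*}L^{\vee}\cong\co_C$, impossible since $\phi$ is simple (so $\phi^{*}\colon JC'\hookrightarrow JC$ and the unit line is the complementary summand). Consequently, if $h^0(C',\mathcal{E}(g^0_t))\ne0$ then $\mathcal{E}$ carries a sub-line bundle $L$ with $-t\le\deg L\le-1$ and $[L]+[g^0_t]\in W^0_{t+\deg L}(C')$; but as $g^0_t$ varies, $[g^0_t]$ sweeps the $t$-dimensional $W^0_t(C')$ while $\dim W^0_{t+\deg L}(C')\le t+\deg L<t$, and the sub-line bundles of $\mathcal{E}$ of degree $\ge-t$ form a bounded family of small dimension (controlled by $\mu(\mathcal{E})\le-1-np$), so a general $g^0_t$ is not caught. (Equivalently: $\phi^{*}W^0_t(C')$ is a $t$-dimensional subvariety of $\mathrm{Pic}^{nt}(C)$, and it is not contained in $W^1_{nt}(C)$, which has dimension $\le nt-3$ by Martens' theorem once one verifies via Castelnuovo--Severi, using $g\ge\max\{n^2p,(2n+1)^2/3\}$, that $C$ is not hyperelliptic; for $(n-1)t\le2$ this already gives a contradiction, and the residual range is handled by the negativity argument above.) I expect this vanishing to be the main obstacle.

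It remains to check the two numerical hypotheses of Theorem~\ref{thm:theorem7}. The first, $\deg E=e\ge3>b+2$, is clear since $b=0$. For $\deg\cl=d>2g+1-\delta$: since $g\ge n^2p$ forces $\mu=[2n(n-1)p/(g-np)]\le2$ and $g-np\ge\frac{n-1}{n}g$, one verifies that when $n\le4$ both $\frac{g-np}{n-1}-2$ and $\frac{2(2n+\mu-3)}{(2n+\mu-1)^2}g$ are $\ge\frac g6$ (using $g\ge(2n+1)^2/3$), so $\delta=\frac g6$ and the hypothesis becomes exactly $d>\max\{2g-np,\,2g+1-\frac g6\}$; when $n\ge5$ one has $\frac{2(2n+\mu-3)}{(2n+\mu-1)^2}g\ge\frac{2(2n-1)}{(2n+1)^2}g$, and in the remaining subcase where $\frac{g-np}{n-1}-2$ is the smallest of the three, $g\ge(2n+1)^2/3$ together with $g\ge n^2p$ gives $\frac{g-np}{n-1}-2\ge np+1$, i.e. $2g+1-\delta\le2g-np$. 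In either case the stated range of $d$ yields $d>2g+1-\delta$, so Theorem~\ref{thm:theorem7} applies and $\cl$ is normally generated.
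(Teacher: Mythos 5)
Your construction and your use of Theorem~\ref{thm:theorem7} are exactly the paper's: take a general $g^0_t$ on $C'$ with $t\le p$, put $B=0$ and $\cl=\ck_C-\phi^{*}g^0_t+E$, and reduce the degree hypothesis to $\delta\ge\min\{\frac g6,\frac{2(2n-1)}{(2n+1)^2}g\}$. Where you deviate is the key vanishing $h^0(C,\phi^{*}g^0_t)=1$, and there your argument has a genuine gap for $n\ge3$. The paper settles this in one line: if $h^0(C,\phi^{*}D_t)\ge 2$, the moving part of $|\phi^{*}D_t|$ is either composed with $\phi$, which forces $h^0(C',D_t)\ge 2$ against the generality of $D_t$, or not composed with $\phi$, and then the Castelnuovo--Severi inequality gives $g\le(n-1)(nt-1)+np<n^2p\le g$, a contradiction. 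Your route through $\phi_{*}\co_C=\co_{C'}\oplus\mathcal{E}$ is complete for $n=2$ (a degree count), but for $n\ge 3$ the decisive step --- that the sub-line bundles $L\subseteq\mathcal{E}$ with $\deg L\ge -t$ move in a family of dimension $<-\deg L$, so that a general $g^0_t$ escapes every translate $W^0_{t+\deg L}(C')-[L]$ --- is only asserted: the slope bound $\mu(\mathcal{E})\le -1-np$ does not control the dimension of the family of sub-line bundles (already for $\deg L=-1$ a one-parameter family of such $L$, e.g.\ when $\mathcal{E}$ contains $L_0\oplus L_0$, makes your count $1+(t-1)=t$, no contradiction), and the parenthetical fallback via Martens' theorem compares $\dim\phi^{*}W^0_t(C')=t$ with $\dim W^1_{nt}(C)\le nt-3$, which is no contradiction once $(n-1)t\ge 3$ --- exactly the range you send back to the unproved count. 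Moreover, excluding $\deg L=0$ needs $\ker(\phi^{*}:\mathrm{Pic}^0(C')\to\mathrm{Pic}^0(C))=0$, which does not follow from simplicity alone (a cyclic \'etale covering of prime degree is simple); here it holds only because a nontrivial kernel element would force $\phi$ to be \'etale, which $g\ge n^2p$ forbids. The clean repair is simply to prove the vanishing by the Castelnuovo--Severi argument above, as the paper does.

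A smaller slip in the numerics: for $n\ge 5$ you invoke $\frac{g-np}{n-1}-2\ge np+1$, which amounts to $g\ge n^2p+3n-3$ and is not guaranteed by the hypotheses. It is also unnecessary: $g\ge n^2p$ gives $\frac{g-np}{n-1}-2\ge\frac gn-2$, and $g\ge\frac{(2n+1)^2}{3}$ gives $\frac gn-2\ge\frac{2(2n-1)}{(2n+1)^2}g$, while $\mu\le 2$ gives $\frac{2(2n+\mu-3)}{(2n+\mu-1)^2}g\ge\min\{\frac g6,\frac{2(2n-1)}{(2n+1)^2}g\}$; hence $\delta\ge\min\{\frac g6,\frac{2(2n-1)}{(2n+1)^2}g\}$ in all cases, which is what the stated range of $d$ requires (this is the paper's computation). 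The remaining bookkeeping in your proposal (choice of $t$ and $e$, nonspeciality, $\deg E\ge 3>b+2$ with $b=0$, the case split at $n=4$ versus $n\ge5$) matches the paper and is fine.
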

\begin{proof}
 Choose a general effective divisor
$D_t$ of degree $ t\le p$ on $C'$, whence $D_t$ is a $g^0_t$. The
Caselnuovo-Severi inequality implies $h^0 (C, \phi ^* D_t ) =1$,
since $g \ge n^2 p$ and $\phi$ is simple. Set $B=0$. Choose a
divisor $E$ on $C$ such that $\deg E >2$ and supp$(\phi ^* D_t )
\cap$supp$(E)= \emptyset$. Then, $\cl:=\ck _C -\phi^{*}  D_t +E$
is a nonspecial very ample line bundle with $h^0 (C, \cl )\ge 4$
since $\deg \cl \ge g+3$. By varying the numbers $\deg E$ and $t$
within $\deg E >2$ and $0 \le t\le p$, we see that $\deg \cl=2g-nt
+\deg E-2$ can take any number $d> 2g -np $.

 From $g\ge n^2 p$, we
get $\mu :=[ \frac{2n(n-1)p}{g-np}]\le 2$ and so $\frac{2(2n+\mu
-3)}{(2n+\mu -1)^2}g  \le \frac{2(2n-1)}{(2n+1)^2}g $. And the
condition $g \ge  n^2 p$ yields $\frac{g-np}{n-1} \ge \frac{g}{n}$
and so $\frac{2(2n-1)}{(2n+1)^2}g  \le \frac{g-np}{n-1}$ for $g\ge
\frac{(2n+1)^2}{3}$. Hence, $\cl$ is normally generated if $\deg
\cl > \mbox{max}\{2g +1-\frac{2(2n-1)}{(2n+1)^2}g, 2g+1-
\frac{g}{6} \}$ by Theorem \ref{thm:theorem7}. As a consequence,
 $C$ admits a nonspecial normally generated line bundle of degree $d$
for any
 $d> \mbox{max}\{ 2g-np,~ 2g+1-\frac{g}{6},
~2g+1-\frac{2(2n-1)}{(2n+1)^2}g \}$.   Accordingly,  the result
holds since $\frac{g}{6} \le \frac{2(2n-1)}{(2n+1)^2}g $ for $n\le
4$ and $\frac{2(2n-1)}{(2n+1)^2}g  \le \frac{g}{6}$ for $n\ge 5$.
\end{proof}
The condition $\deg E>b+2$ in Theorem \ref{thm:theorem7} is
optimal, since
 there exists  a nonspecial very ample line
 bundle  failing to be normally generated with $\deg E = b+2$ as follows.

\begin{exm} ~Assume that $C$  admits
  a simple  $n$-fold covering morphism $\phi :C \rightarrow C'$
 for a smooth  non-rational curve   $C'$. Take a base point free
 complete $g^1_d$ on $C'$. Set $g^0_{d-1}:=g^1_d -Q$ for a point  $Q\in
C'$ and $B:=\phi^{*}(Q) -P$ for $P\in \phi^{*} Q$. Choose a
divisor $E>0$ on $C$ satisfying deg$E=3$ and ${\rm supp}(B
+\phi^{*} g^0 _{d-1} ) \cap {\rm supp}(E)=\emptyset$.  Then $\cl
:= \ck _C -\phi^{*} g^0_{d-1} -B+E$ is  very ample and fails to be
normally generated if $g>(n-1)(nd-2) +ng(C')$.
\label{exm4.5}\end{exm}
\begin{proof} The condition  $g>(n-1)(nd-2)+ng(C')$ with $g(C')
>0$ yields $\deg \cl=2g+2 -nd > g+2$, whence $h^0 (C,\cl )\ge 3$.
 According to the Caselnuovo-Severi inequality, we have
 $h^0 (C,\phi ^* g^0_{d-1} +B )=1$, since $\phi$ is simple with $g>(n-1)(nd-2) +ng(C')$.
Thus $\cl$ is very ample for $\deg E = 3$ as mentioned before.

For a better understanding, we figure the correspondence of
 points on curves $C$ and $C'$
in Figure \ref{figure 6}.
 Here,

 \noindent i) $E$ : the sum of the
points being arranged as triangles on the left upper side,

\noindent ii) $\ck _C \otimes \cl ^{-1}(E)$ : the sum of the
points being arranged as black dots and stars on the left side,

\noindent iii) $B$ : the sum of the points being arranged as stars
on the left bottom line,

\noindent iv) $g^1_d$ : the sum of the points being arranged as
black dots and black diamonds on the  right side,

\noindent v) $g^0_{d-1}$ : the sum of the points being arranged as
black dots on the right side.\\

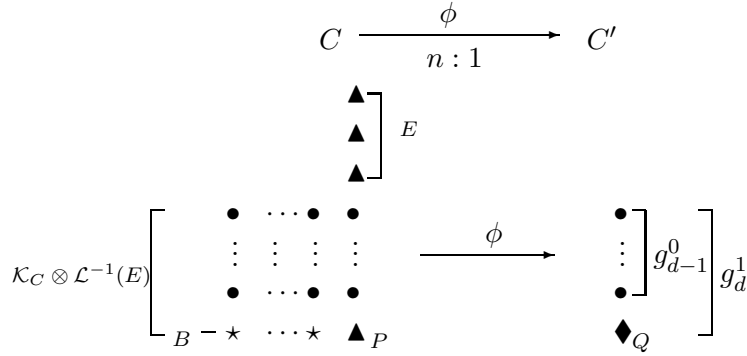
\begin{figure}

\begin{picture}(360,150)

\put(145,130){$C$}

\put(160,135){\vector(1,0){75}}

\put(190,140){$\phi$}

\put(185,123){$n:1$}

\put(245,130){$C'$}

\put(110,65){$\bullet$}

\put(112,48){$\vdots$}

\put(110,35){$\bullet$}

\put(110,20){$\star$}

\put(125,65){$\cdots$}

\put(127,48){$\vdots$}

\put(125,35){$\cdots$}

\put(125,20){$\cdots$}

\put(140,65){$\bullet$}

\put(142,48){$\vdots$}

\put(140,35){$\bullet$}

\put(140,20){$\star$}

\put(155,110){$\blt$}

\put(155,95){$\blt$}

\put(155,80){$\blt$}

\put(168,113){\line(-1,0){5}}

\put(168,113){\line(0,-1){32}}

\put(168,81){\line(-1,0){5}}

{\scriptsize \put(175,97){$E$}}

\put(155,65){$\bullet$}

\put(157,48){$\vdots$}

\put(155,35){$\bullet$}

\put(155,20){$\blt$}

{\scriptsize \put(164,17){$P$}}

\put(101,23){\line(1,0){5}}

{\scriptsize \put(90,18){$B$}}

\put(82,69){\line(1,0){5}}

\put(82,69){\line(0,-1){47}}

\put(82,22){\line(1,0){5}}

 {\scriptsize \put(30,42){$\ck _C \otimes \cl ^{-1}(E)$}}

\put(255,65){$\bullet$}

\put(257,48){$\vdots$}

\put(255,35){$\bullet$}

\put(255,20){$\bltd$}

{\scriptsize \put(262,17){$Q$}}

\put(267,69){\line(-1,0){5}}

\put(267,69){\line(0,-1){32}}

\put(267,37){\line(-1,0){5}}

\put(270,49){$g^0_{d-1}$}

\put(292,69){\line(-1,0){5}}

\put(292,69){\line(0,-1){47}}

\put(292,22){\line(-1,0){5}}

\put(295,42){$g^1_d$}

\put(183,52){\vector(1,0){50}}

\put(207,57){$\phi$}

\end{picture}

\caption{ Example of nonspecial line bundle\label{figure 6}}

\end{figure}

Set $D_4 := E+P$. We see that $h^0 (C,\ck _C \otimes \cl
^{-1}(D_4))\ge 2$, which yields $\dim \overline{\phi_{\cl
}(D_4)}\le 1$.  Thus $\cl$ fails to be normally generated by Lemma
\ref{Prop:prop 1.6}, since $h^1 (C, \cl ^2 (-D_4 ))=0$ for $\deg
\cl \ge g+2$.
\end{proof}

Using Lemma \ref{Prop:prop 1.6}, we obtain a sufficient condition
for  nonspecial line bundles failing to be normally generated.

\begin{thm}
 Assume that $C$  admits
  a multiple covering morphism $\phi :C \rightarrow C'$
 for a smooth curve $C'$. And let
$\cl$ be a nonspecial line bundle with $h^0 (C,\cl ) \ge 3$. We
may set $\cl\sim \ck _C-\phi^{*} g^0_t -B+E$ as in  Theorem
 \ref{thm:theorem7}. Assume  $B = \sum _{i=1}^{b}\phi^{*} ( Q_i)-P_i$,
  $\phi(P_i )=Q_i$.
  Then  $\cl$ is very ample and  fails to be  normally generated
   if $a\ge 3 $,~$a+b  >\frac{(a+b-r)(a+b-r-1)}{2}$ and
   $h^1 (C, \cl ^2 (-E-\sum ^{b} _{i=1 } P_i  ))=0$,~ where
$a:=\deg E$, $r:=h^0 (C', g^0_t +\sum ^{b} _{i=1 }  Q_i )-1$.
\label{Prop:prop 5.2}
\end{thm}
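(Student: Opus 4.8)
The plan is to apply Lemma~\ref{Prop:prop 1.6} with a carefully chosen effective divisor $D$, namely $D:=E+\sum_{i=1}^{b}P_i$, which has degree $a+b$. First I would record that $\cl$ is very ample: since $\cl$ is nonspecial with $h^0(C,\cl)\ge 3$, very ampleness is equivalent to $\deg E\ge 3$, which holds by hypothesis. Next, since $h^1(C,\cl^2(-D))=h^1(C,\cl^2(-E-\sum_{i=1}^b P_i))=0$ is assumed, the only thing left to verify in order to invoke Lemma~\ref{Prop:prop 1.6} is the numerical inequality $\deg D>\frac{(n_D+1)(n_D+2)}{2}$, where $n_D:=\dim\overline{D}$ in the embedding $C\subset\bP H^0(C,\cl)$. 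So the heart of the argument is a dimension estimate: I need to show $\dim\overline{D}\le a+b-r-1$, which combined with the hypothesis $a+b>\frac{(a+b-r)(a+b-r-1)}{2}$ gives exactly what Lemma~\ref{Prop:prop 1.6} needs.

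To estimate $\dim\overline{D}$, the standard move is to count hyperplanes through $D$, i.e.\ to bound $h^0(C,\cl(-D))$ from below; indeed $\dim\overline{D}=r-(h^0(C,\cl)-h^0(C,\cl(-D)))$, so a large $h^0(C,\cl(-D))$ forces $\overline D$ to be small. Here I would compute $\cl(-D)=\cl(-E-\sum_{i=1}^b P_i)$. Using $\cl\sim\ck_C-\phi^*g^0_t-B+E$ with $B=\sum_{i=1}^b(\phi^*Q_i-P_i)$, one gets
\[
\cl(-E)\sim \ck_C-\phi^*g^0_t-\sum_{i=1}^b\phi^*Q_i+\sum_{i=1}^b P_i,
\]
and hence $\cl(-E-\sum_{i=1}^b P_i)\sim\ck_C-\phi^*(g^0_t+\sum_{i=1}^b Q_i)=\ck_C-\phi^*(g^0_t+\sum Q_i)$. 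By Serre duality $h^0(C,\cl(-D))=h^1(C,\phi^*(g^0_t+\sum_{i=1}^b Q_i))$, and $h^1(C,\phi^*(g^0_t+\sum Q_i))=h^0(C,\ck_C-\phi^*(g^0_t+\sum Q_i))$. Now $h^0$ of a pulled-back-type divisor relates to data on $C'$: by the projection formula / the definition of $r=h^0(C',g^0_t+\sum_{i=1}^b Q_i)-1$ together with Riemann--Roch on $C$ for the divisor $\phi^*(g^0_t+\sum Q_i)$ of degree $n(t+b)$, I would extract $h^0(C,\cl(-D))\ge$ something like $g-n(t+b)-1+h^0(C,\phi^*(g^0_t+\sum Q_i))\ge g-n(t+b)-1+(r+1)$. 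Meanwhile $h^0(C,\cl)=\deg\cl+1-g=(2g-2-n t-b+a)+1-g$ since $\cl$ is nonspecial. Subtracting, the total should telescope to $\dim\overline D=h^0(C,\cl)-h^0(C,\cl(-D))\le a+b-r-1$ — this is the key computation to push through carefully, keeping track of the $\phi^*$-terms and the fact that $h^0(C,\phi^*g^0_t+B)=1$.

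With $\dim\overline D\le a+b-r-1=:n_D$ in hand, the hypothesis $a+b=\deg D>\frac{(a+b-r)(a+b-r-1)}{2}\ge\frac{(n_D+1)(n_D+2)}{2}$ (the last inequality because $n_D+1=a+b-r$) is precisely the numerical condition of Lemma~\ref{Prop:prop 1.6}, and together with $h^1(C,\cl^2(-D))=0$ it yields that $\cl$ fails to be normally generated. The role of the assumption $a\ge 3$ is twofold: it guarantees very ampleness of $\cl$, and it should ensure that the divisor $D$ genuinely imposes the expected geometry (e.g.\ that $\overline D$ is a proper linear subspace, not all of $\bP^r$), preventing degenerate cases in the dimension count. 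I expect the main obstacle to be the dimension estimate for $\overline D$: one must argue that hyperplanes containing $E$ and the points $P_i$ are exactly governed by sections of $g^0_t+\sum Q_i$ on $C'$ pulled back, which uses the structure of $\cl$ as in $(*)$ and the simplicity/degree conditions on $\phi$ implicitly through $h^0(C,\phi^*g^0_t+B)=1$; handling the inequality $h^0(C,\phi^*(g^0_t+\sum Q_i))\ge r+1$ versus a possible strict inequality (extra sections on $C$ not coming from $C'$) only helps, since it makes $\overline D$ smaller, so the estimate is safe in the needed direction.
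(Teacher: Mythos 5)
Your route is the same as the paper's: take $D=E+\sum_{i=1}^{b}P_i$, observe that $\ck _C\otimes\cl^{-1}(D)\sim\phi^{*}(g^0_t+\sum_{i=1}^{b}Q_i)$, bound the number of conditions $D$ imposes on $|\cl|$, and feed the result into Lemma \ref{Prop:prop 1.6}. However, there is an off-by-one error in passing from conditions to the dimension of the span, and it breaks your concluding step. The span $\overline D$ has codimension $h^0(C,\cl(-D))$ in $\bP H^0(C,\cl)$, so $\dim\overline D=\bigl(h^0(C,\cl)-h^0(C,\cl(-D))\bigr)-1$; your formula $\dim\overline D=r-\bigl(h^0(C,\cl)-h^0(C,\cl(-D))\bigr)$ is not correct (it even contradicts your own remark that a large $h^0(C,\cl(-D))$ should make $\overline D$ small), and it leads you to the weaker bound $n_D\le a+b-r-1$. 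With that bound your final inequality $\frac{(a+b-r)(a+b-r-1)}{2}\ge\frac{(n_D+1)(n_D+2)}{2}$ is false, because $(n_D+1)(n_D+2)=(a+b-r)(a+b-r+1)>(a+b-r)(a+b-r-1)$; the theorem's hypothesis $a+b>\frac{(a+b-r)(a+b-r-1)}{2}$ would then not suffice to invoke Lemma \ref{Prop:prop 1.6}. The correct count, as in the paper, is $h^0(C,\cl)-h^0(C,\cl(-D))\le a+b-r-1$, hence $\dim\overline D\le a+b-r-2$, and with this the stated hypothesis gives exactly $\deg D>\frac{(n+1)(n+2)}{2}$, so the proof closes.

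Two smaller points. The cleanest bookkeeping avoids your explicit degree formula: by Riemann--Roch and $h^1(C,\cl)=0$ one has $h^0(C,\cl)-h^0(C,\cl(-D))=\deg D-h^1(C,\cl(-D))=(a+b)-h^0\bigl(C,\phi^{*}(g^0_t+\sum_{i=1}^{b}Q_i)\bigr)\le a+b-r-1$, using $h^0(C,\phi^{*}(g^0_t+\sum Q_i))\ge h^0(C',g^0_t+\sum Q_i)=r+1$. Your expression $\deg\cl=2g-2-nt-b+a$ is only valid for double coverings, since $\deg B=b(n-1)$ for an $n$-fold covering; this slip disappears if you argue as above. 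Finally, $a\ge 3$ is needed only for very ampleness; Lemma \ref{Prop:prop 1.6} requires no further nondegeneracy of $\overline D$.
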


\begin{proof} $\cl$ is  very ample for $\deg E \ge 3$.
Let  $D :=E +\sum ^{b} _{i=1 } P_i $. Since $h^0 (C,\ck _C \otimes
\cl ^{-1} (D))\ge h^0 (C', g^0_t +\sum ^{b} _{i=1 }  Q_i) $, the
Riemann-Roch Theorem gives $h^0 (C, \cl ) -h^0 (C, \cl (-D))\le
a+b-r-1$ and so $\dim \overline{\phi_{\cl }(D)}\le a+b-r-2$.
Accordingly, $\cl$ fails to be normally generated by Lemma
\ref{Prop:prop 1.6}, if $\deg D=a+b
>\frac{(a+b-r)(a+b-r-1)}{2}$ and
$h^1 (C, \cl ^2 (-E-\sum ^{b} _{i=1 } P_i  ))=0$.
\end{proof}

From Theorem \ref{Prop:prop 5.2}, we explicitly obtain nonspecial
 line bundles failing to be noramlly generated on  multiple
coverings as the following.

\begin{rmk}\label{rmk:remark4.7} ~Let $C$  admit
  a simple  $n$-fold covering morphism $\phi :C \rightarrow C'$
 for a smooth  non-rational curve   $C'$. Let $g^r_d$ be a complete linear series  on $C'$ with $r\ge
1$. Assume $g>(n-1)(nd-r-1) +ng(C')$. For a general
 $\sum ^{r} _{i=1} Q_i$ on $C'$,  we have dim$g^r_d
(-\sum ^{r} _{i=1} Q_i )=0$. Set $g^0 _{d-r} :=g^r_d (-\sum ^{r}
_{i=1} Q_i )$ and $B:=\sum _{i=1}^{r}\phi^{*} ( Q_i)-P_i$ with
$\phi (P_i) =Q_i$. Choose a divisor $E> 0$ on $C$ such that ${\rm
supp}(\phi^{*} g^0 _{d-r} + B) \cap {\rm supp}(E)=\emptyset$,
$\deg E \ge 3$ and $r> \frac{\deg E(\deg E -3)}{2}$. Let $\cl:=\ck
_C -\phi ^{*} g^0_{d-r} -B +E$. Note that the assumption
$g>(n-1)(nd-r-1) +ng(C')$ yields $h^0 (C, \phi^{*} g^0 _{d-r}
+B)=1$ by the Castelnuovo-Severi inequality. And the assumption
also gives $\deg \cl \ge g+2$ and so $h^0 (C,\cl ) \ge 3$.
   According to Theorem
\ref{Prop:prop 5.2}, $\cl$ is very ample and fails to be normally
generated if $h^1 (C, \cl ^2 (-E-\sum ^{r} _{i=1 } P_i  ))=0$.
\end{rmk}

\section{Normal generation of special line bundles on multiple coverings}

In this section, we investigate the normal generation of special
line bundles on a multiple covering. Firstly, we give a sufficient
condition for a special line bundle being normally generated.

\begin{thm}
Assume that $C$  admits
  a simple  $n$-fold covering morphism $\phi :C \rightarrow C'$
 for a smooth curve $C'$ of genus $p$ with $g >np$. Let $\cl$
be a special very ample line bundle on $C$ with $\deg \cl
>\frac{3g-3}{2}$.
 Assume $\ck _C \otimes \cl ^{-1}=\phi ^{*} \cn (-\sum ^{b} _{i=1 }P_i)$
  for a line bundle $\cn$ on $C'$ and  $\sum ^{b} _{i=1 }P_i$
 on $C$ such that  $\sum ^{b} _{i=1 } \phi (P_i) \leq \cn$
  and $P_i+ P_j  \nleqslant \phi^{*} (\phi
 (P_i))$.  Then,  $\cl$ is normally generated
 if $b\le 3$ and $\deg \cl > 2g +1-2h^1 (C, \cl )-\delta$,
 which is equivalent to
 $\cli (\cl ) < \delta -1$,~where
$\delta := \min \{ \frac{g}{6}, ~ \frac{g-np}{n-1} -2, ~
\frac{2(2n+\mu -3)}{(2n+\mu
    -1)^2}g~\}$ and  $\mu:=[\frac {2n(n-1)p}{g-np}]$.

\label{thm:theorem6}
\end{thm}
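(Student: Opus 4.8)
The plan is to follow the scheme of the proof of Theorem \ref{thm:theorem7}, with the invariants $h^1(C,\cl)$ and $b$ taking over the roles that $h^1(C,\cl)=0$ and $\deg E$ played there. Suppose $\cl$ fails to be normally generated. Since $\cl$ is special and $\deg \cl > \frac{3g-3}{2}$, Lemma \ref{rmk:remark2.2} applies with $\epsilon=0$ and produces a line bundle $\ca\simeq\cl(-R)$ with $R>0$, $\cli(\ca)\le\cli(\cl)$, $\deg\ca\ge\frac{g-1}{2}$, $h^0(C,\ca)\ge2$ and $h^1(C,\ca)\ge h^1(C,\cl)+2$. Put $\cm:=\ck_C\otimes\ca^{-1}(-\widetilde{B})$, where $\widetilde{B}$ is the base locus of $\ck_C\otimes\ca^{-1}$, so that $h^0(C,\cm)=h^0(C,\ck_C\otimes\ca^{-1})=h^1(C,\ca)$ and $\cli(\cm)\le\cli(\ck_C\otimes\ca^{-1})=\cli(\ca)\le\cli(\cl)<\delta-1$. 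Since $\cl$ is special, $h^1(C,\cl)\ge1$, so $\alpha:=h^0(C,\cm)-1=h^1(C,\ca)-1\ge h^1(C,\cl)+1\ge2$; thus only the cases $\alpha\ge2$ of the analysis in Theorem \ref{thm:theorem7} can arise here.

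The first step reproduces that analysis verbatim to conclude that $\varphi_{\cm}$ is composed with $\phi$. If $\varphi_{\cm}$ is birational, then in the range $\deg\cm\ge g-1$ Lemma \ref{Prop:prop 1.7}(i) together with $\deg\ck_C\otimes\cm^{-1}\ge\deg\ca$ forces $\cli(\cl)\ge\cli(\cm)\ge\frac{g-1}{6}\ge\delta-1$, while in the range $\deg\cm\le g-1$ the Castelnuovo--Severi bound $[\frac{2g}{\deg\cm-1}]\le2(n-1)+\mu$ and Lemma \ref{Prop:prop 1.7}(ii) force $\cli(\cl)\ge\min\{\frac{g}{3}-1,\ \frac{2(2n+\mu-3)}{(2n+\mu-1)^2}g-1\}\ge\delta-1$; both are contradictions, so $\varphi_{\cm}$ has degree $m\ge2$, and then $\cm$ is composed with $\phi$ by Lemma \ref{Prop:prop 2.6} together with $\cli(\cm)<\frac{g-np}{n-1}-3$.

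Next I would translate the special description. From $\ck_C\otimes\ca^{-1}=\ck_C\otimes\cl^{-1}(R)=\phi^{*}\cn\big(R-\sum_{i=1}^{b}P_i\big)$ and $\sum_{i=1}^{b}\phi(P_i)\le\cn$ one writes $\ck_C\otimes\cl^{-1}=\phi^{*}\cn_0+\sum_{i=1}^{b}\big(\phi^{*}\phi(P_i)-P_i\big)$ with $\cn_0:=\cn-\sum_{i=1}^{b}\phi(P_i)$, so that $h^0(C,\phi^{*}\cn_0)\le h^0(C,\ck_C\otimes\cl^{-1})=h^1(C,\cl)$; the hypothesis $P_i+P_j\nleqslant\phi^{*}(\phi(P_i))$ makes $\phi(P_1),\dots,\phi(P_b)$ pairwise distinct, so the $b$ ``defective fibres'' $\phi^{*}\phi(P_i)-P_i$ are disjoint. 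Because $\varphi_{\cm}$ is composed with $\phi$, each defective fibre is either completed to the full fibre $\phi^{*}\phi(P_i)$ inside $R$ (which costs $P_i\le R$) or absorbed into $\widetilde{B}$; writing $k$ for the number of the first kind and $F_l$ for the maximal effective divisor on $C'$ with $\phi^{*}F_l\le R$, the pullback part of $\ck_C\otimes\ca^{-1}$ is $\phi^{*}\big(\cn_0+\sum_{\text{first kind}}\phi(P_i)+F_l\big)$. Drawing the figure exactly as in Figure \ref{figure 5}, one reads off an upper bound for $h^0(C,\ck_C\otimes\ca^{-1})$ in terms of $h^1(C,\cl)$, $k$ and $l$; combining it with $\deg R\le2\big(h^0(C,\ck_C\otimes\ca^{-1})-h^1(C,\cl)\big)$ (which comes from $\cli(\ck_C\otimes\ca^{-1})\le\cli(\cl)$), with $\deg R\ge nl+k$ (from $R\ge\phi^{*}F_l+\sum_{\text{first kind}}P_i$), with $h^0(C,\ck_C\otimes\ca^{-1})\ge h^1(C,\cl)+2$, and with $b\le3$, these inequalities turn out to be mutually inconsistent, which is the desired contradiction. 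For double coverings the hypothesis $3g>8(p+1)$ forces $\frac{g-np}{n-1}-2\ge\frac{g}{6}$, $\mu\le6$ and $\frac{2(2n+\mu-3)}{(2n+\mu-1)^2}>\frac16$, hence $\delta=g/6$, which is what is needed for the double-covering version.

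The step I expect to be the main obstacle is precisely this last piece of bookkeeping: making the bound on $h^0(C,\ck_C\otimes\ca^{-1})$ completely airtight when some fibres of $\phi$ over ${\rm supp}\,F_l$, over the $\phi(P_i)$, or inside $\widetilde{B}$ are ramified, and then checking that it is exactly the bound $b\le3$ — rather than some weaker restriction on $b$ — that produces the contradiction. This is where the condition $P_i+P_j\nleqslant\phi^{*}(\phi(P_i))$ has to be used carefully, and it is also the boundary at which the sharpness example with $b=4$ announced in the introduction shows that the argument cannot be pushed any further.
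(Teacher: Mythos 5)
Your first half is exactly the paper's argument: Lemma \ref{rmk:remark2.2} with $\epsilon=0$, the definition of $\cm=\ck_C\otimes\ca^{-1}(-\widetilde{B})$, $\alpha\ge 2$, and the elimination of the birational case via Lemma \ref{Prop:prop 1.7} and Castelnuovo--Severi, so that $\cm$ is composed with $\phi$ by Lemma \ref{Prop:prop 2.6}. The gap is in the final counting, which you leave as ``these inequalities turn out to be mutually inconsistent.'' They are not: with the bounds you actually list, namely $h^0(C,\ck_C\otimes\ca^{-1})\le h^1(C,\cl)+l+k$ (read off from the figure), $\deg R\le 2\bigl(h^0(C,\ck_C\otimes\ca^{-1})-h^1(C,\cl)\bigr)$ and $\deg R\ge nl+k$, one only gets $nl+k\le 2(l+k)$, i.e.\ $(n-2)l\le k$, which for $n=2$ is vacuous and imposes no restriction on $k\le b$ at all. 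So no contradiction with $b\le 3$ follows from what you wrote, and the hypothesis $b\le 3$ is never genuinely used.

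The missing ingredient, which is the actual content of the paper's proof, is a sharpening of the $h^0$ bound by $2$ using the very ampleness (and base point freeness) of $\cl$, together with a case split on $k$, the number of the points $P_i$ contained in $R$. For $k\ge 2$ the paper shows $h^0(C,\ck_C\otimes\ca^{-1})=h^0\bigl(C',\cn(-\sum_{i=1}^{b}\phi(P_i)+G)\bigr)\le h^0(C,\ck_C\otimes\cl^{-1})+l+k-2$ (here $G=F_l+\sum_{i=1}^{k}\phi(P_i)$ and very ampleness of $\cl$ supplies the $-2$, since adding two points to $\ck_C\otimes\cl^{-1}$ does not raise $h^0$); then $nl+k\le\deg R\le 2(l+k-2)$ gives $k\ge 4+(n-2)l\ge 4>b$, the desired contradiction. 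The case $k\le 1$ needs its own argument: base point freeness gives $h^0(C,\ck_C\otimes\ca^{-1})-h^0(C,\ck_C\otimes\cl^{-1})\le l$, whence $\frac{nl+k}{2}\le l$ forces $n=2$, $k=0$, and then very ampleness improves the bound to $l-1$, ruling this out too. Without this two-step refinement the inequality chain you propose cannot produce the contradiction, precisely at the spot you flagged as the ``main obstacle''; so as it stands the proposal does not prove the theorem.
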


\begin{proof}
Suppose $\cl$ fails to be normally generated. Then,  we have
$\ca\simeq \cl (-R)$, $R>0$,  and $\cm:=\ck  _C \otimes \ca^{-1}
(-\widetilde{B})$ as in the proof of Theorem \ref{thm:theorem7}.
Set $d:=\deg \cm$, $\alpha :=h^0 (C, \cm ) -1$ and $m:= \deg
\varphi _{\cm}$.
 Note that $\alpha \ge 2$ by Lemma \ref{rmk:remark2.2}
\hbox{\rm (iii)}.
 Then, $\varphi _{\cm}$ is composed with $\phi$ by the same reason as in the proof of Theorem
\ref{thm:theorem7}.

 Accordingly, $\ck _C \otimes \cl ^{-1}(-B)$
is also composed with
 $\phi$ since $\ck  _C  \otimes \ca^{-1}
>\ck _C \otimes \cl^{-1}$ and $h^0 (C,\ck  _C  \otimes \ca^{-1} ) > h^0 (C,\ck  _C  \otimes \cl^{-1} )$,
  where $B$ is the base locus of $\ck _C \otimes \cl ^{-1}$. Thus
 $B \ge \sum _{i=1}^{b} (\phi^{*} (\phi (P_i )) -P_i )$, since $\ck _C \otimes
\cl^{-1} =\phi^{*} N(-\sum _{i=1}^{b} P_i)$. Note that $$\ck _C
\otimes \cl ^{-1}(-\sum _{i=1}^{b}(\phi^{*} (\phi (P_i )) -P_i
))=\phi ^{*} (\cn (-\sum _{i=1}^{b} \phi (P_i ) ).$$ Then the
pull-back part of $\ck  _C  \otimes \ca^{-1}$ via $\phi$ becomes
$\phi ^*  (\cn(-\sum _{i=1}^{b} \phi (P_i )+G))$ for some divisor
$G>0$ on $C'$, since $\ck  _C  \otimes \ca^{-1}
>\ck _C \otimes \cl^{-1}$.
 Set $R= \phi^{*} (F_l ) + R_0$
 for a divisor $R_0 \ge 0$  on $C$ and  a degree $l$ divisor
  $F_l$ on $C'$ such that
  $R_0 \ngeqslant \phi^{*} Q$ for any  $Q\in C'$. Because of
   $\ca\simeq \cl (-R)$,
  we have $G=F_l +\sum _{i=1}^{k}\phi (P_i)$,
 where only the points $P_1$,...,$P_k$  of $\sum
_{i=1}^{b}P_i$ are  contained in $R$.

  For a better understanding, we figure the correspondence of
points on curves $C$ and $C'$ in Figure \ref{figure 2}: Here,\\

\noindent i) $\ck _C \otimes \cl ^{-1}$ : the sum of the points
being arranged as black dots and stars on the left side,

\noindent ii) $B$ : the sum of the points being arranged as stars
 on the left side,

\noindent iii) $R$ : the sum of  the points being arranged as
triangles on the left side,

\noindent iv) $\ck  _C  \otimes \ca^{-1}$ : the sum of the points
being arranged as black dots, stars and triangles on the left
side,

\noindent v) $G$ : the sum of the  points being arranged as the
assigned $(l+k )$-black diamonds  on the right side,

\noindent vi) $\cn(-\sum _{i=1}^{b} \phi (P_i ))$ : the sum of the
points being arranged as black dots on the right side,

\noindent vii)  $F_l$ : the sum of the points being arranged as
the assigned $l$-black diamonds on the right side,

 \noindent viii) $\sum _{i=k+1}^{b} P_i $: the sum of the points being
 arranged as blank circles on the left side,

\noindent ix) $\sum _{i=1}^{k} P_i $ : the sum of the points being
 arranged as the assigned triangles on the left side,

\noindent x) the points of $\sum _{i=1}^{b} P_i $ are arranged in
one column since $P_i+ P_j  \nleqslant \phi^{*} (\phi
 (P_i))$.\\

\begin{figure}[ht]

\begin{picture}(360,310)

\put(125,290){$C$}

\put(154,295){\vector(1,0){100}}

\put(195,300){$\phi$}

\put(190,280){$n:1$}

\put(269,290){$C'$}

\put(110,270){$\bullet$}

\put(110,255){$\bullet$}

\put(112,238){$\vdots$}

\put(110,225){$\bullet$}

\put(110,210){$\star$}

\put(112,193){$\vdots$}

\put(110,180){$\star$}

\put(123,270){$\cdots$}

\put(123,255){$\cdots$}

\put(123,238){$\cdots$}

\put(123,225){$\cdots$}

\put(123,210){$\cdots$}

\put(123,180){$\cdots$}

\put(140,270){$\bullet$}

\put(140,255){$\bullet$}

\put(142,238){$\vdots$}

\put(140,225){$\bullet$}

\put(140,210){$\star$}

\put(142,193){$\vdots$}

\put(140,180){$\star$}

\put(155,270){$\bullet$}

\put(155,255){$\bullet$}

\put(157,238){$\vdots$}

\put(155,225){$\bullet$}

\put(155,210){$\star$}

\put(157,193){$\vdots$}

\put(155,180){$\star$}

\put(110,165){$\star$}

\put(112,148){$\vdots$}

\put(110,135){$\star$}

\put(110,120){$\star$}

\put(123,165){$\cdots$}

\put(123,135){$\cdots$}

\put(123,90){$\cdots$}

\put(123,120){$\cdots$}

\put(140,165){$\star$}

\put(140,135){$\star$}

\put(142,148){$\vdots$}

\put(140,120){$\star$}

\put(142,102){$\vdots$}

\put(140,90){$\star$}

\put(155,165){$\circ$}

\put(157,148){$\vdots$}

\put(155,135){$\circ$}

\put(155,120){$\blt$}

\put(157,102){$\vdots$}

\put(155,90){$\blt$}

\put(100,213){\line(1,0){5}}

\put(100,213){\line(0,-1){121}}

\put(100,92){\line(1,0){5}}

{\scriptsize\put(90,155){$B$}}

\put(80,272){\line(1,0){5}}

\put(80,272){\line(0,-1){180}}

\put(80,92){\line(1,0){5}}

{\scriptsize\put(43,177){$\ck _C \otimes \cl ^{-1}$}}

\put(112,102){$\vdots$}

\put(110,90){$\star$}

\put(110,75){$\blt$}

\put(112,58){$\vdots$}

\put(110,45){$\blt$}

\put(123,75){$\cdots$}

\put(123,45){$\cdots$}

\put(140,75){$\blt$}

\put(142,58){$\vdots$}

\put(140,45){$\blt$}

\put(155,75){$\blt$}

\put(157,58){$\vdots$}

\put(155,45){$\blt$}

\put(110,30){$\blt$}

\put(112,13){$\vdots$}

\put(110,0){$\blt$}

%

\put(123,30){$\cdots$}

\put(118,0){$\cdots$}

\put(140,30){$\blt$}

\put(132,13){$\vdots$}

\put(132,0){$\blt$}


%

\put(90,138){\line(1,0){5}}

\put(90,138){\line(0,-1){138}}

\put(90,0){\line(1,0){5}}

{\scriptsize\put(80,67){$R$}}

\put(40,272){\line(1,0){5}}

\put(40,272){\line(0,-1){272}}

\put(40,0){\line(1,0){5}}

{\scriptsize\put(2,100){$\ck  _C  \otimes \ca^{-1}$}}

\put(270,270){$\bullet$}

\put(270,255){$\bullet$}

\put(272,238){$\vdots$}

\put(270,225){$\bullet$}

\put(270,210){$\bullet$}

\put(272,192){$\vdots$}

\put(270,180){$\bullet$}

\put(270,165){$\circ$}

\put(272,148){$\vdots$}

\put(270,135){$\circ$}

\put(270,120){$\bltd$}

\put(272,103){$\vdots$}

\put(270,90){$\bltd$}

\put(188,185){\vector(1,0){60}}

\put(270,75){$\bltd$}

\put(272,57){$\vdots$}

\put(270,45){$\bltd$}

\put(290,272){\line(-1,0){5}}

\put(290,272){\line(0,-1){90}}

\put(290,182){\line(-1,0){5}}

{\scriptsize \put(292,235){$\cn(-\sum _{i=1}^{b}\phi (P_i ))$}}

\put(170,168){\line(-1,0){5}}

\put(170,168){\line(0,-1){30}}

\put(170,138){\line(-1,0){5}}

{\scriptsize \put(175,148) {$\sum _{i=k+1}^{b} P_i $}}

\put(170,93){\line(-1,0){5}}

\put(170,123){\line(0,-1){30}}

\put(170,123){\line(-1,0){5}}

{\scriptsize \put(175,103) {$\sum _{i=1}^{k} P_i $}}

\put(260,78){\line(1,0){5}}

\put(260,78){\line(0,-1){30}}

\put(260,48){\line(1,0){5}}

{\scriptsize\put(245,62){$F_l$}}

\put(290,125){\line(-1,0){5}}

\put(290,125){\line(0,-1){77}}

\put(290,48){\line(-1,0){5}}

{\scriptsize\put(295,86){$G$}}

\put(215,193){$\phi$}

\end{picture}

\caption {$\cl$ is special \label{figure 2} }

\end{figure}

Suppose $k \le 1$. The base point freeness  of $\cl$ implies $h^0
(C, \ck  _C  \otimes \ca^{-1})-h^0 (C, \ck  _C  \otimes \cl ^{-1})
\le l$, since $\ck  _C  \otimes \ca^{-1} (-\widetilde{B})$ is
composed with $\phi$. From the condition $\cli (\ca )\le \cli (\cl
)$ we obtain
$$\frac{nl +k }{2} \le \frac{\deg R}{2}\le h^0 (C, \ck  _C  \otimes \ca^{-1})-h^0 (C, \ck  _C  \otimes \cl ^{-1}) \le l,$$ which yields $n
=2$ and $k =0$. Then $$h^0 (C, \ck  _C  \otimes \ca^{-1})-h^0 (C,
\ck  _C  \otimes \cl ^{-1}) \le l-1,$$ since $\cl$ is very ample
and $\ck  _C  \otimes \ca^{-1} (-\widetilde{B})$ is composed with
$\phi$. It cannot also occur as in the above. Thus we have $k \ge
2$.
 According to  the very ampleness of
$\cl$ and $h^0 (C,\ck _C \otimes \cl ^{-1})=h^0 (C', \cn (-\sum
_{i=1}^{b} \phi (P_i )))$,  $$ h^0 (C,\ck  _C  \otimes \ca^{-1})=
h^0 (C', \cn (-\sum _{i=1}^{b} \phi (P_i ) +G))\le h^0 (C, \ck _C
\otimes \cl ^{-1} )+l +k -2,$$  since $\ck  _C  \otimes \ca^{-1}
(-\widetilde{B})$ is composed with $\phi$. Hence
$$\deg R\le 2(h^0 (C,\ck  _C  \otimes \ca^{-1})-h^0 (C,\ck _C \otimes \cl ^{-1}))\le
2(l+k-2),$$ since $\mbox{Cliff}(\ck  _C  \otimes \ca^{-1})\le
\mbox{Cliff}(\ck _C \otimes \cl ^{-1})$ and $\ca\cong \cl (-R)$.
Then the inequality $\deg R \ge nl+k$ yields $k\ge 4$, which is a
contradiction to $b\le 3$. Thus $\cl$ is normally generated.
\end{proof}

Note that  any $n$-fold covering morphism is simple in case $n$ is
prime. Specifically, if $C$ is a double covering then we have the
following.

\begin{cor} Let $C$ be a double
covering of a smooth curve $C'$ of genus $p$ via a morphism $\phi$
with $3g>8(p+1) $. And let $\cl$ be a special  very ample line
bundle on $C$ with $\ck _C \otimes \cl ^{-1}=\phi^{*}
  \cm \otimes \co _C (B )$ for a   line bundle $\cm$ on $C'$
  and  a divisor $B\ge 0$ on $C$ such that $B \ngeq \phi^* Q$ for any $Q \in C'$.
  If $\deg B \le 3$ and $\deg \cl > {\mbox max} \{
\frac{3g-3}{2},~ 2g+1-2h^1 (C,\cl)-\frac{g}{6}
      \}$, then $\cl$ is normally generated.

\label{thm:theorem6.1}\end{cor}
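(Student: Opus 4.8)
The plan is to derive Corollary~\ref{thm:theorem6.1} as the specialization of Theorem~\ref{thm:theorem6} to the case $n=2$, following exactly the same reduction as in the proof of Corollary~\ref{thm:theorem14}. First I would observe that a double covering morphism is automatically simple (since $2$ is prime), so the hypotheses of Theorem~\ref{thm:theorem6} on the covering are met once $g>2p$, which follows from $3g>8(p+1)$. Next I would check that the structural hypothesis of Theorem~\ref{thm:theorem6} is satisfied: writing $\ck_C\otimes\cl^{-1}=\phi^{*}\cm\otimes\co_C(B)$ with $B\ngeq\phi^{*}Q$ for any $Q$, one wants to rewrite this as $\phi^{*}\cn(-\sum_{i=1}^{b}P_i)$ with $\sum\phi(P_i)\le\cn$ and $P_i+P_j\nleqslant\phi^{*}(\phi(P_i))$. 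Here $b=\deg B$; since $B\ngeq\phi^{*}Q$, in a double cover each fibre contributes at most one point of $B$, so writing each point $P_i$ of $B$ and setting $\cn:=\cm\otimes\co_{C'}(\sum\phi(P_i))$ and using $\phi^{*}\co_{C'}(\phi(P_i))=\co_C(\phi^{*}(\phi(P_i)))$ gives $\phi^{*}\cn(-\sum P_i)=\phi^{*}\cm\otimes\co_C(\sum(\phi^{*}(\phi(P_i))-P_i))$; one then needs $\sum(\phi^{*}(\phi(P_i))-P_i)=B$, which holds precisely because no fibre meets $B$ in more than one point, and simultaneously this forces $P_i+P_j\nleqslant\phi^{*}(\phi(P_i))$.

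Then I would verify the numerical quantity $\delta$ simplifies as in Corollary~\ref{thm:theorem14}: with $n=2$ and $3g>8(p+1)$ one gets $\tfrac{g-np}{n-1}-2=g-2p-2\ge\tfrac{g}{6}$ and $\mu=[\tfrac{2n(n-1)p}{g-np}]=[\tfrac{4p}{g-2p}]\le6$ (from $3g>8(p+1)$, i.e. $g-2p>\tfrac{g}{3}+\tfrac{8}{3}>4p/6\cdot\text{(bound)}$, which I would make precise), whence $\tfrac{2(2n+\mu-3)}{(2n+\mu-1)^2}=\tfrac{2(\mu+1)}{(\mu+3)^2}$ is minimized over $0\le\mu\le6$ at $\mu=6$ giving $\tfrac{14}{81}>\tfrac16$. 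Therefore $\delta=\min\{\tfrac{g}{6},\,g-2p-2,\,\tfrac{2(\mu+1)}{(\mu+3)^2}g\}=\tfrac{g}{6}$. With $b=\deg B\le3$ and $\delta=\tfrac{g}{6}$, the hypothesis $\deg\cl>2g+1-2h^1(C,\cl)-\delta$ of Theorem~\ref{thm:theorem6} becomes $\deg\cl>2g+1-2h^1(C,\cl)-\tfrac{g}{6}$, and combined with the very-ampleness/speciality requirement $\deg\cl>\tfrac{3g-3}{2}$ this is exactly $\deg\cl>\max\{\tfrac{3g-3}{2},\,2g+1-2h^1(C,\cl)-\tfrac{g}{6}\}$. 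Hence Theorem~\ref{thm:theorem6} applies and yields that $\cl$ is normally generated.

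The only genuinely nontrivial point is the bookkeeping in the first paragraph: showing that the hypothesis ``$B\ngeq\phi^{*}Q$ for all $Q$'' in a double cover is equivalent to ``each fibre of $\phi$ contains at most one point of $B$'', and hence that $B=\sum_{i=1}^{b}(\phi^{*}(\phi(P_i))-P_i)$ with the $P_i$ automatically satisfying $P_i+P_j\nleqslant\phi^{*}(\phi(P_i))$, so that the abstract hypotheses of Theorem~\ref{thm:theorem6} are literally met. Everything else is the arithmetic of $\delta$ that was already carried out in Corollary~\ref{thm:theorem14}. I expect the main obstacle to be purely notational — keeping the identification $\ck_C\otimes\cl^{-1}=\phi^{*}\cm\otimes\co_C(B)=\phi^{*}\cn(-\sum P_i)$ consistent with the normalization $B\ngeq\phi^{*}Q$ — rather than any new geometric input.

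\begin{proof}
A double covering morphism is simple since $n=2$ is prime, and $3g>8(p+1)$ gives $g>2p=np$, so the covering hypotheses of Theorem~\ref{thm:theorem6} hold. Write the $b:=\deg B$ points of $B$ as $P_1,\dots,P_b$; since $B\ngeq\phi^{*}Q$ for any $Q\in C'$ and each fibre of the double cover has length $2$, no fibre contains two of the $P_i$, so the $Q_i:=\phi(P_i)$ are such that $\phi^{*}(Q_i)-P_i\ge0$ and $B=\sum_{i=1}^{b}(\phi^{*}(Q_i)-P_i)$, and $P_i+P_j\nleqslant\phi^{*}(Q_i)$ automatically. Setting $\cn:=\cm\otimes\co_{C'}(\sum_{i=1}^{b}Q_i)$ we get $\ck_C\otimes\cl^{-1}=\phi^{*}\cm\otimes\co_C(B)=\phi^{*}\cn(-\sum_{i=1}^{b}P_i)$ with $\sum_{i=1}^{b}\phi(P_i)=\sum Q_i\le\cn$, so the structural hypothesis of Theorem~\ref{thm:theorem6} is satisfied with this $\cn$ and $b=\deg B\le3$.

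It remains to identify $\delta$. With $n=2$ and $3g>8(p+1)$ we have $\tfrac{g-np}{n-1}-2=g-2p-2>\tfrac{g}{6}$ and $\mu=[\tfrac{2n(n-1)p}{g-np}]=[\tfrac{4p}{g-2p}]\le6$, so $\tfrac{2(2n+\mu-3)}{(2n+\mu-1)^2}g=\tfrac{2(\mu+1)}{(\mu+3)^2}g\ge\tfrac{14}{81}g>\tfrac{g}{6}$. Hence $\delta=\min\{\tfrac{g}{6},\,g-2p-2,\,\tfrac{2(\mu+1)}{(\mu+3)^2}g\}=\tfrac{g}{6}$, and the inequality $\deg\cl>2g+1-2h^1(C,\cl)-\delta$ becomes $\deg\cl>2g+1-2h^1(C,\cl)-\tfrac{g}{6}$. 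Together with $\deg\cl>\tfrac{3g-3}{2}$ this is precisely $\deg\cl>\max\{\tfrac{3g-3}{2},\,2g+1-2h^1(C,\cl)-\tfrac{g}{6}\}$. Therefore $b\le3$ and $\deg\cl>2g+1-2h^1(C,\cl)-\delta$, so $\cl$ is normally generated by Theorem~\ref{thm:theorem6}.
\end{proof}
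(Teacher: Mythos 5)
Your route is exactly the paper's: the paper dismisses this corollary with the single remark that it ``can be shown similarly to Corollary \ref{thm:theorem14}'', i.e.\ one specializes Theorem \ref{thm:theorem6} to $n=2$ after checking that $\delta=\frac{g}{6}$ under $3g>8(p+1)$. Your numerical part is correct and is the same computation as in Corollary \ref{thm:theorem14}: $g-2p-2>\frac{g}{4}>\frac{g}{6}$, $\mu=[\frac{4p}{g-2p}]\le 6$, and $\frac{2(\mu+1)}{(\mu+3)^2}\ge\frac{14}{81}>\frac16$, so $\delta=\frac{g}{6}$ and the degree hypothesis of the corollary is the degree hypothesis of Theorem \ref{thm:theorem6}.

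There is, however, a genuine slip in the structural reduction, precisely at the spot you yourself identified as the only nontrivial bookkeeping. You take $P_1,\dots,P_b$ to be the points of $B$ and claim $B=\sum_{i=1}^{b}\bigl(\phi^{*}(Q_i)-P_i\bigr)$ with $Q_i=\phi(P_i)$. For a double cover, $\phi^{*}(Q_i)-P_i$ is the conjugate point $\iota(P_i)$ under the covering involution $\iota$, so $\sum_{i}\bigl(\phi^{*}(Q_i)-P_i\bigr)=\iota(B)$, which equals $B$ only when every point of $B$ is a ramification point; the hypothesis $B\ngeq\phi^{*}Q$ does not give this, so with your choice $\phi^{*}\cn(-\sum P_i)=\phi^{*}\cm\otimes\co_C(\iota(B))$, which is not $\ck_C\otimes\cl^{-1}$ in general. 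The fix is immediate and restores the paper's intended reduction: write $B=\sum_{i=1}^{b}R_i$, set $P_i:=\phi^{*}(\phi(R_i))-R_i$ (the conjugate of $R_i$) and $\cn:=\cm\otimes\co_{C'}(\sum_{i}\phi(R_i))$; then $\phi^{*}\cn(-\sum_i P_i)=\phi^{*}\cm\otimes\co_C(\sum_i R_i)=\ck_C\otimes\cl^{-1}$ on the nose, $b=\deg B\le 3$, and $B\ngeq\phi^{*}Q$ yields $P_i+P_j\nleqslant\phi^{*}(\phi(P_i))$ for $i\ne j$ by the very argument you gave (otherwise two conjugate points of $B$, or a ramification point counted twice in $B$, would force $\phi^{*}Q\le B$). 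One further assertion you make without justification is $\sum_i\phi(P_i)\le\cn$, which with this $\cn$ amounts to effectivity of $\cm$; this is not among the corollary's stated hypotheses, but the paper's one-sentence proof glosses over the same point, so I flag it only as an imprecision shared with the source. With the conjugate-point correction your proof coincides with the paper's.
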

This corollary can be shown
similarly to  Corollary \ref{thm:theorem14}. On the one hand, the
condition $b\le 3$ in the theorem is sharp in some sense, since
for $ b= 4$ there are special very ample line bundles failing to
be normally generated on   multiple coverings as follows.
\begin{exm} ~Let $C$ be a  simple $n$-fold covering of a smooth
 plane curve $C'$ of degree
$d$ with $g\ge 3ng(C')$ and $d \ge n+2$. Denote the covering
morphism by $\phi : C \rightarrow C'$.  Let $H$ be a general line
section of $C'$. Choose divisors $F_4 : =\sum_{i=1} ^{4} Q_i \le
H$ on $C'$
 and $D_4 :=\sum_{i=1} ^{4} P_i$
on $C$ such that $\phi (P_i) =Q_i$. Let $\cl$ be the line bundle
with $\ck _C \otimes \cl ^{-1}=\phi ^{*} ( {\cal O}_{C'} (H))(-D_4
)$.  Then $\cl$ is very ample and fails to be normally generated.
\end{exm}

\begin{proof} Suppose that we have $h^1 (C, \cl ) \ge 2$
or $\cl$ is not very ample. Then $h^0 (C,\phi ^* {\cal
O}_{C'}(H)(-D_4 +P+Q))\ge 2$ for some $P$ and $Q$ of $C$. If the
base point free part of $\phi ^* {\cal O}_{C'}(H)(-D_4 +P+Q)$ is
not composed with $\phi$, then the Castelnuovo-Severi inequality
implies $g \le (n-1)(nd-3) +n g(C')$, which derives $(d-1)(d-2)
\le (n-1)d$  since $g \ge 3ng(C')$ and $g(C')
=\frac{(d-1)(d-2)}{2}$. It cannot happen for $d\ge n+2$. Thus the
base point free part of $\phi ^* {\cal O}_{C'}(H)(-D_4 +P+Q)$ is
composed with $\phi$, which implies that the smooth plane curve
$C'$ of degree $d$ has a $g^1_{a\le d-2}$. It cannot occur. As a
consequence,  $\cl$ is a very ample line bundle with $h^1 (C, \cl
)=1$.

For a better understanding, we figure the correspondence of points
on curves $C$ and $C'$ in Figure 2. Here,\\

\noindent i) $\ck _C \otimes \cl ^{-1}$ : the sum of the points
being arranged as stars on the left,

\noindent ii) $H$ :  the sum of the points being arranged as black
dots and black diamonds on the right,

\noindent iii) $F_4$ : the sum of the points being arranged as
black diamonds on the right,

\noindent iv) $D_4$ : the sum of  the  points being arranged as
triangles on the left.\\

\begin{figure}

\begin{picture}(360,150)

\put(130,130){$C$}

\put(145,135){\vector(1,0){75}}

\put(175,140){$\phi$}

\put(170,123){$n:1$}

\put(230,130){$C'\subset \mathbb P^2$}

\put(95,110){$\star$}

\put(97,93){$\vdots$}

\put(95,80){$\star$}

\put(95,65){$\star$}

\put(95,50){$\star$}

\put(95,35){$\star$}

\put(95,20){$\star$}

\put(110,110){$\cdots$}

\put(110,80){$\cdots$}

\put(110,65){$\cdots$}

\put(110,50){$\cdots$}

\put(110,35){$\cdots$}

\put(110,20){$\cdots$}

\put(125,110){$\star$}

\put(127,93){$\vdots$}

\put(125,80){$\star$}

\put(125,65){$\star$}

\put(125,50){$\star$}

\put(125,35){$\star$}

\put(125,20){$\star$}

\put(140,110){$\star$}

\put(142,93){$\vdots$}

\put(140,80){$\star$}

\put(140,65){$\blt$}

\put(140,50){$\blt$}

\put(140,35){$\blt$}

\put(140,20){$\blt$}

\put(153,69){\line(-1,0){5}}

\put(153,69){\line(0,-1){47}}

\put(153,22){\line(-1,0){5}}

{\scriptsize\put(160,42){$D_4$}}

\put(80,112){\line(1,0){5}}

\put(80,112){\line(0,-1){90}}

\put(80,22){\line(1,0){5}}

 {\scriptsize\put(43,62){$\ck _C \otimes \cl ^{-1}$}}

\put(240,110){$\bullet$}

\put(242,93){$\vdots$}

\put(240,80){$\bullet$}

\put(240,65){$\bltd$}

\put(240,50){$\bltd$}

\put(240,35){$\bltd$}

\put(240,20){$\bltd$}

\put(260,69){\line(-1,0){5}}

\put(260,69){\line(0,-1){47}}

\put(260,22){\line(-1,0){5}}

{\scriptsize\put(265,42){$F_4$}}

\put(290,112){\line(-1,0){5}}

\put(290,112){\line(0,-1){90}}

\put(290,22){\line(-1,0){5}}

{\scriptsize\put(297,62){$H$}}

\put(175,65){\vector(1,0){45}}

\put(195,70){$\phi$}

\end{picture}

\caption{ Example of special line bundle\label{figure 3}}

\end{figure}
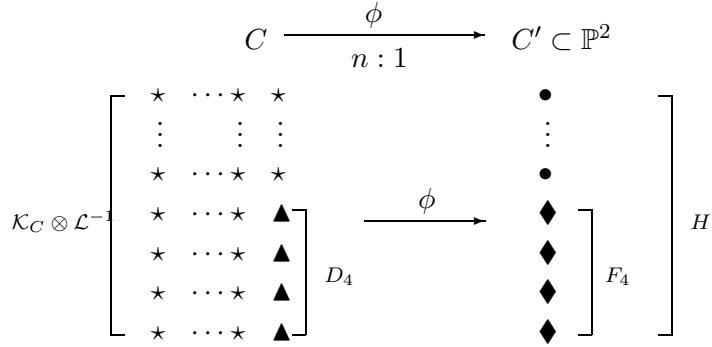

 Since $h^1 (C, \cl )=1$ and  $\ck _C \otimes \cl ^{-1}(D_4) =\phi ^* {\cal
 O}_{C'}(H)$, we have
  $h^0 (C, \cl ) -h^0 (C, \cl (-D_4 ) )\le 2$,
   and so  $\dim
\overline{\varphi_{\cl}(D_4)}\le 1$. Thus $\cl$ fails to be
normally generated due to Lemma \ref{Prop:prop 1.6}, since $h^1
(C, \cl^2 (-D))=0$.
\end{proof}
We have more various special  line bundles failing to be normally
generated on multiple coverings  due to Lemma \ref{Prop:prop 1.6}.

\begin{thm}

 Assume that $C$  admits
  a multiple covering morphism $\phi :C \rightarrow C'$
 for a smooth curve $C'$.
 And let $\cl$ be a special very ample line bundle on $C$ such
 that  $\ck _C \otimes \cl ^{-1}=\phi ^{*} \cn (-\sum ^{b}
_{i=1 }P_i)$
  for a line bundle $\cn$ on $C'$ and  $\sum ^{b} _{i=1 }P_i$
 on $C$.
 Then $\cl$ fails to be  normally generated,  if
 $h^1 (C, \cl ^2(- \sum ^{b} _{i=1 }P_i))=0$ and $ b> \frac{(b-c)(b-c+1)}{2}$,
 where $c:=h^0 (C,
\phi ^* \cn) -h^0 (C, \phi ^{*} \cn (-\sum ^{b}_{i=1} P_i))$.
\label{thm:theorem9}
   \end{thm}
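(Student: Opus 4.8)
The plan is to apply Lemma \ref{Prop:prop 1.6} directly, taking the effective divisor to be $D := \sum_{i=1}^{b} P_i$. Since $\cl$ is very ample by hypothesis, the lemma reduces the theorem to two things: the equality $H^1(C,\cl^2(-D))=0$, which is exactly the assumed $h^1(C,\cl^2(-\sum_{i=1}^{b}P_i))=0$, and the inequality $\deg D > \frac{(m+1)(m+2)}{2}$, where $m:=\dim\overline{D}$ denotes the dimension of the linear span (the quantity Lemma \ref{Prop:prop 1.6} calls $n$). So everything comes down to expressing $m$ in terms of $b$ and $c$.

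For that I would first use, exactly as in the proof of Theorem \ref{Prop:prop 5.2}, that for the complete embedding $\varphi_\cl$ one has $\dim\overline{D}=h^0(C,\cl)-h^0(C,\cl(-D))-1$. Riemann--Roch together with Serre duality gives
\[
h^0(C,\cl)-h^0(C,\cl(-D))=\deg D+h^0(C,\ck_C\otimes\cl^{-1})-h^0(C,\ck_C\otimes\cl^{-1}(D)).
\]
This is where the multiple-covering hypothesis enters: from $\ck_C\otimes\cl^{-1}=\phi^{*}\cn(-\sum_{i=1}^{b}P_i)$ and $D=\sum_{i=1}^{b}P_i$ we get $\ck_C\otimes\cl^{-1}(D)=\phi^{*}\cn$, hence $h^0(C,\ck_C\otimes\cl^{-1}(D))=h^0(C,\phi^{*}\cn)$ and $h^0(C,\ck_C\otimes\cl^{-1})=h^0(C,\phi^{*}\cn(-\sum_{i=1}^{b}P_i))$. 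Substituting, and recalling $\deg D=b$ and the definition of $c$, I obtain $h^0(C,\cl)-h^0(C,\cl(-D))=b-c$, so that $m=\dim\overline{D}=b-c-1$.

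With this value $\frac{(m+1)(m+2)}{2}=\frac{(b-c)(b-c+1)}{2}$, so the hypothesis $b>\frac{(b-c)(b-c+1)}{2}$ is precisely the inequality $\deg D>\frac{(m+1)(m+2)}{2}$ required by Lemma \ref{Prop:prop 1.6}. That lemma then applies and shows $\cl$ fails to be normally generated.

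I do not expect a genuine obstacle here; the statement is essentially a corollary of Lemma \ref{Prop:prop 1.6}. The only load-bearing step is the identity $\ck_C\otimes\cl^{-1}(D)=\phi^{*}\cn$, which is what forces the auxiliary number $c$ --- defined through the pulled-back bundle $\phi^{*}\cn$ --- to be the quantity governing the linear span $\overline{D}$, and hence where the covering structure is used. If one wanted a safety margin it would suffice to note $\dim\overline{D}\le b-c-1$, since a smaller span only relaxes the numerical inequality, although in fact equality holds because $\cl$ embeds $C$ by a complete linear system.
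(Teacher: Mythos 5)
Your proposal is correct and follows essentially the same route as the paper: set $D=\sum_{i=1}^{b}P_i$, use Riemann--Roch and Serre duality together with $\ck_C\otimes\cl^{-1}(D)=\phi^{*}\cn$ to get $h^0(C,\cl)-h^0(C,\cl(-D))=b-c$, hence $\dim\overline{D}=b-c-1$, and then invoke Lemma \ref{Prop:prop 1.6} with the assumed vanishing $h^1(C,\cl^2(-D))=0$. Your write-up simply makes explicit the Riemann--Roch computation that the paper states in one line.
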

 \begin{proof}
 Set $D_b :=\sum^{b}_{i=1}P_i$. Then  $h^0 (C, \cl )-h^0
(C,\cl(-D_b))=b-c$ by the Riemann-Roch theorem, which implies
$\mbox{
 dim}\overline{\varphi _\cl (D_b )} =b-c-1$.  Accordingly, the result follows
 from
Lemma \ref{Prop:prop 1.6}.
 \end{proof}

Note that, in Theorem \ref{thm:theorem9}, we get $b> 3$ by the
very ampleness of $\cl$ and the condition $ b>
\frac{(b-c)(b-c+1)}{2}$. Thus the bound $b\le 3$ in Theorem
\ref{thm:theorem6} might be optimal.

Consider the following as an application of Theorem
\ref{thm:theorem9}. Let $C'$ be a linearly normal smooth curve of
degree $d\ge 7$ in $\mathbb P ^4$. And let $C$ be a smooth curve
of genus $g$ admitting a simple covering morphism $\phi : C
\rightarrow C'$ of degree $n\ge 3$ with $g>(n-1)(nd -6)+ng(C')$.
Let $H$ be a general hyperplane section of $C'$. Set
$\cn:={\co}_{C'}(H)$. Assume $\sum^{7}_{i=1}Q_i \le H$  and $ \phi
(P_i )=Q_i$ for each $i=1,...,7$. Then $\cl:=\ck _C \otimes \phi
^* \cn ^{-1}(\sum ^{7}_{i=1} P_i)$ is a special very ample  line
bundle failing to be normally generated.
 This can be shown as follows.

 First, we claim $\cl$ is very ample with $h^1 (C, \cl ) =1$. Suppose not.
 Then  $h^0 (C,\phi ^* \cn (- \sum ^{7}_{i=1} P_i +R_1 +R_2 ))\ge 2$ for some $R_1 +R_2$ on
 $C$, since
 $h^1 (C,\cl )=h^0 (C,\phi ^* \cn ( -\sum ^{7}_{i=1} P_i))\ge 1$.
  According to the Castelnuovo-Severi inequality and the assumption
  $g>(n-1)(nd -6)+ng(C')$, the base point
 free part of $\phi ^* \cn (- \sum ^{7}_{i=1} P_i + R_1 +R_2 )$ is
 composed with $\phi$.
Let  $\{ Q_1, ..., Q_l \} := \{Q_1, ..., Q_7 \}
  -\{ \phi (R_1 ), \phi (R_2) \}$. For $n\ge  3$,
  $$h^0 (C', H (-\sum ^{l}_{i=1} Q_i ))
  \ge h^0 (C,\phi ^* \cn (- \sum ^{7}_{i=1} P_i +R_1 +R_2 ))
   \ge 2,$$ which cannot happen since $l\ge 5$ and
 the points of $\sum^{7}_{i=1}Q_i$ are  in  a general position.
 Consequently, $\cl $ is very ample with $h^1 (C, \cl ) =1$,
 whence $c:=h^0 (C, \phi ^* \cn) -h^0 (C, \phi ^{*} \cn (-\sum ^{7}_{i=1}
P_i))\ge 4$.
  According to Theorem \ref{thm:theorem9},
 $\cl$ fails to be normally generated, since the assumption
 $g>(n-1)(nd -6)+ng(C')$ implies
 $\deg \cl =2g -nd +5 \ge \frac{3g}{2} -1$  and so
 $h^1 (C, \cl ^2(- \sum ^{7} _{i=1 }P_i))=0$.

\pagestyle{myheadings} \markboth{}{}

\end{document}